\theoremstyle{plain}
\newtheorem{thm}{Theorem}[section]
\theoremstyle{definition}
\newtheorem{rem}[thm]{Remark}
\numberwithin{equation}{section}
\newtheorem{theorem}{Theorem}[section]
\numberwithin{equation}{section}
\newcommand{\beqn}{\begin{eqnarray}}
\newcommand{\eeqn}{\end{eqnarray}}
\newcommand{\be}{\begin{equation}}
\newcommand{\ee}{\end{equation}}
\newcommand{\ben}{\begin{enumerate}}
\newcommand{\een}{\end{enumerate}}
\def\Na{\mathbb N}
\def\Ze{\mathbb Z}
\def\esssup{\mathop{\text{\rm ess\,sup}}}
\def\Msp{\mathfrak{M}^+}
\begin{document}

\title{Weighted inequalities for quasilinear integral operators on the semiaxis and application to the Lorentz spaces}
\author{Dmitrii V.~Prokhorov,
Vladimir D. Stepanov\footnote{ The research work of D.V. Prokhorov and V.D. Stepanov was financially supported by the Russian Scientific Fund (Project
14-11-00443).}}

\date{}
\maketitle

\noindent{\bf Abstract:} Weighted $L^p-L^r$ inequalities with arbitrary measurable non-negative weights for positive quasilinear integral operators with Oinarov's kernel on the semiaxis are characterized. Application to the boundedness of maximal operator in the Lorentz $\Gamma-$spaces is given.  
\vspace{3mm}

\noindent{\bf 2000 Mathematics Subject Classification:} {Primary  26D15; Secondary 47G10}
\vspace{3mm}

\noindent{\bf Key words and phrases:} Weighted inequality, quasilinear  operator, Lebesgue space, Lorentz space. 
\vspace{3mm}

\section{Introduction\label{Int}}

Let $\mathbb{R}_+:=[0,\infty).$ Denote $\mathfrak{M}$ the set of all  measurable functions on $\mathbb{R}_+$ and $\mathfrak{M}^+\subset\mathfrak{M}$ the subset of all non-negative functions. If $0<p\leq\infty$ and $v\in\mathfrak{M}^+$ we define
$$
L^p_v:=\left\{f\in\mathfrak{M}:\|f\|_{L^p_v}
:=\left(\int_0^\infty |f(x)|^pv(x)dx\right)^{\frac{1}{p}}<\infty\right\},
$$ 
\begin{equation*}
L^\infty_v:=\left\{f\in\mathfrak{M}:\|f\|_{L^\infty_v}
:=\underset{x\geq 0}{\mathrm{ess\,sup}}~v(x)|f(x)|<\infty\right\}.
\end{equation*}

Let $0<q\leq\infty$ and $w\in\mathfrak{M}^+.$
We consider quasilinear operators on $\mathfrak{M}^+$ of the form
\begin{equation*}
 (Tf)(x) =\left(\int_x^\infty w(y)\left(\int_0^y k(y,z)f(z)dz\right)^qdy\right)^{\frac{1}{q}},
\end{equation*}
\begin{equation*}
 ({\cal T}f)(x) =\left(\int_0^x w(y)\left(\int_y^\infty k(z,y)f(z)dz\right)^qdy\right)^{\frac{1}{q}},
\end{equation*}
\begin{equation*}
 (Sf)(x) =\left(\int_x^\infty w(y)\left(\int_y^\infty k(z,y)f(z)dz\right)^qdy\right)^{\frac{1}{q}},
\end{equation*}
\begin{equation*}
 ({\cal S}f)(x) =\left(\int_0^x w(y)\left(\int_0^y k(y,z)f(z)dz\right)^qdy\right)^{\frac{1}{q}}
\end{equation*}
and
\begin{equation*}
 ({\bf T}f)(x) =\left(\int_x^\infty k(y,x)w(y)\left(\int_0^y f(z)dz\right)^qdy\right)^{\frac{1}{q}},
\end{equation*}
\begin{equation*}
 ({\bf{\mathfrak T}}f)(x) =\left(\int_0^x k(x,y)w(y)\left(\int_y^\infty f(z)dz\right)^qdy\right)^{\frac{1}{q}},
\end{equation*}
\begin{equation*}
 ({\bf S}f)(x) =\left(\int_x^\infty k(y,x)w(y)\left(\int_y^\infty f(z)dz\right)^qdy\right)^{\frac{1}{q}},
\end{equation*}
\begin{equation*}
 ({\mathfrak S}f)(x) =\left(\int_0^x k(x,y)w(y)\left(\int_0^y f(z)dz\right)^qdy\right)^{\frac{1}{q}},
\end{equation*}
where $k(x,y)\geq 0$ is a measurable kernel and the right hand sides are to replace by essential supremums
\begin{equation*}
 (Tf)(x) =\underset{y\geq x}{\mathrm{ess\,sup}}~ w(y)\int_0^y k(y,z)f(z)dz,
\end{equation*}
\begin{equation*}
({\bf T}f)(x) =\underset{y\geq x}{\mathrm{ess\,sup}}~ k(y,x)w(y)\int_0^y f(z)dz,
\end{equation*}
and similarly for the others, when $q=\infty.$ 

Let $u, v, w\in \mathfrak{M}^+$ be weights, $1\leq p\leq\infty,$ $0<r\leq\infty.$  Our aim is to characterize the weighted inequalities
\begin{equation}\label{Tin}
 \left\|Tf\right\|_{L^r_u} \leq C_T\left\|f\right\|_{L^p_v},\,\,f\in\mathfrak{M}^+,
\end{equation}
\begin{equation}\label{calTin}
 \left\|{\mathcal T}f\right\|_{L^r_u} \leq C_{{\mathcal T}}\left\|f\right\|_{L^p_v},\,\,f\in\mathfrak{M}^+,
\end{equation}
\begin{equation}\label{Sin}
 \left\|Sf\right\|_{L^r_u} \leq C_S\left\|f\right\|_{L^p_v},\,\,f\in\mathfrak{M}^+,
\end{equation}
\begin{equation}\label{calSin}
 \left\|{\mathcal S}f\right\|_{L^r_u} \leq C_{{\mathcal S}}\left\|f\right\|_{L^p_v},\,\,f\in\mathfrak{M}^+
\end{equation}
and
\begin{equation}\label{bTin}
 \left\|{\bf T}f\right\|_{L^r_u} \leq C_{{\bf T}}\left\|f\right\|_{L^p_v},\,\,f\in\mathfrak{M}^+,
\end{equation}
\begin{equation}\label{fTin}
 \left\|{\mathfrak T}f\right\|_{L^r_u} \leq C_{{\mathfrak T}}\left\|f\right\|_{L^p_v},\,\,f\in\mathfrak{M}^+,
\end{equation}
\begin{equation}\label{bSin}
\left\|{\bf S}f\right\|_{L^r_u} \leq C_{{\bf S}}\left\|f\right\|_{L^p_v},\,\,f\in\mathfrak{M}^+,
\end{equation}
\begin{equation}\label{fSin}
 \left\|{\mathfrak S}f\right\|_{L^r_u} \leq C_{\mathfrak S}\left\|f\right\|_{L^p_v},\,\,f\in\mathfrak{M}^+,
\end{equation}
where  a Borel function $k(x,y)\geq 0$ on $[0,\infty)^2$ satisfies Oinarov's condition: $k(x,y)=0$ if  $x<y$, and there is a constant $D\geq 1$ independent of $x\geq z\geq y\geq 0$ such that
\begin{equation}\label{O}
\frac{1}{D}\left(k(x,z)+k(z,y)\right)\leq k(x,y)\leq D\left(k(x,z)+k(z,y)\right)
\end{equation} 
and the constants $C_T$ and others are taken as the least possible. If $q=r<\infty$ these inequalities are reduced to the generalized Hardy-type inequalities which were well studied 
see, for instance, \cite{BK},  \cite{O}, \cite{S1} with further extensions and improvements in  \cite{Lai}, \cite{LS},  \cite{P0}, \cite{P1},  \cite{P4}, \cite{SU1}, \cite{SU2} and others. The case $q=\infty$ is closely related to recently initiated  studies of supremum operators \cite{GOP}, \cite{GP}, \cite{P2}, \cite{P3}, \cite{P5}, \cite{PS2}, \cite{S2}. If $k(x,y)\equiv 1$ the inequality \eqref{calSin} plays an important role in analysis on the Morry-type spaces (see, \cite{BG}, \cite{BGGM1}, \cite{BGGM2}, \cite{BJT}, \cite{BO}. In particular, for some parameters $p, q, r$
this case of \eqref{calTin} was solved in \cite{GMP1}, \cite{GMP2} and \eqref{calSin} in \cite{BO}. Complete solution of this case is given in \cite{PS3}, \cite{PS4}.

By a new method we characterize  the inequalities \eqref{Tin}--\eqref{fSin} with a kernel $k(x,y)$ satisfying \eqref{O} for all parameters $1\leq p\leq\infty, 0<r\leq\infty, 0<q\leq\infty.$ The cases $p=\infty$ and $r=\infty$ are trivial and the interval $0<p<1$ is excluded because in this case it can be shown that if, say, $C_T<\infty,$ then $C_T=0$ (see \cite{PS1}, Theorem 2 for details).  

Sections \ref{TS} and \ref{calTS} are devoted to the study of \eqref{Tin}--\eqref{calSin} and sections \ref{bTS} and \ref{mfTS} to \eqref{bTin}--\eqref{fSin}. It is interesting to observe that the second part is partially based on the first. In the last section \ref{M} we illustrate our results by a solution of well known problem on a sharp characterization of the $\Gamma^p(v)\to\Gamma^q(w)$ boundedness of the Hardy-Littlewood maximal operator for all $0<p,q<\infty$ including the most difficult cases missed in \cite{GHS} and \cite{S}.
 
We use signs $:=$ and $=:$ for determining new quantities and $\mathbb{Z}$ for the set of all integers. For positive functionals $F$ and $G$ we write $F\lesssim G,$ if $F\leq cG$ with some positive constant $c$, which depends only on irrelevant parameters. $F\approx G$ means $F\lesssim G\lesssim F$ or $F=cG.$ $\chi_E$ denotes the characteristic function (indicator) of a set $E.$ Uncertainties of the form $0\cdot\infty, \frac{\infty}{\infty}$ and $\frac{0}{0}$ are taken to be zero. 
 $\Box$ stands for the end of a proof. 

\section{Operators $T$ and $S$\label{TS}}

Suppose for simplicity that $\int_0^t u<\infty$ for all $t>0$ and define
the functions $\sigma:[0,\infty)\to [0,\infty]$, $\sigma^{-1}:[0,\infty)\to [0,\infty)$ by (here $\inf\varnothing=\infty$) 
$$
\sigma(x):=\inf\left\{y>0:\int_0^y u\ge 2\int_0^x u\right\},~~
\sigma^{-1}(x):=\inf\left\{y>0:\int_0^y u\ge \frac{1}{2}\int_0^x u\right\}.$$
The functions $\sigma$ and $\sigma^{-1}$ are increasing and from the continuity of an integral with respect to an upper limit it follows for any $x\in[0,\infty)$ that
$\int_0^{\sigma^{-1}(x)}u=\frac{1}{2}\int_0^x u$ and if $\sigma(x)<\infty$, then  $\int_0^{\sigma(x)}u=2\int_0^x u$.

Let $\sigma^{m}$, $m\in \Na$ be a composition of $m$ functions $\sigma$ and similar for  $\sigma^{-m}.$

For $0<c<d\le\infty$ and $f\in\Msp$ we put
\begin{align*}(H_{c,d} f)(x)&:=\chi_{[c,d)}(x)\int_{\sigma^{-1}(c)}^x k(x,z)f(z)dz,\\
 (H_c f)(x)&:=\chi_{[c,\infty)}(x)\int_0^x k(x,z)f(z)dz.
\end{align*}

 
\begin{thm}\label{theorem11} Let $1\le p<\infty$,  $0<r<\infty$, $0<q\le\infty$, $\frac{1}{s}:=\left(\frac{1}{r}-\frac{1}{p}\right)_+$. For validity of the inequality
 \eqref{Tin} it is necessary and sufficient that the inequalities 
\begin{align}
 &\left(\int_0^\infty u(x)\left(\int_x^\infty w\right)^\frac{r}{q}\left(\int_0^x k(x,z)f(z)dz\right)^r\,dx\right)^\frac{1}{r}\le A_0\|f\|_{L^p_v},\label{A0} \\
&\left(\int_0^\infty u(x)\left(\int_x^\infty [k(z,x)]^qw(z)dz\right)^\frac{r}{q}\left(\int_0^x f\right)^r\,dx\right)^\frac{1}{r}\le A_1\|f\|_{L^p_v},
\label{A1} 
   \end{align}
if $q<\infty$ or
\begin{align}
&\left(\int_0^\infty u(x)[\underset{y\geq x}{\mathrm{ess\,sup}}~w(y)]^r\left(\int_0^x k(x,z)f(z)dz\right)^r\,dx\right)^\frac{1}{r}\le A_0\|f\|_{L^p_v},\label{A0inf} \\
&\left(\int_0^\infty u(x)[\underset{y\geq x}{\mathrm{ess\,sup}}[w(y)k(y,x)]]^r\left(\int_0^x f\right)^r\,dx\right)^\frac{1}{r}\le A_1\|f\|_{L^p_v}
\label{A1inf} 
   \end{align}   
for $q=\infty$ hold for all $f\in \Msp$ and the constant
 \begin{equation}\label{A2}
A_2:=\begin{cases}
    \displaystyle\sup_{t\in(0,\infty)}\left(\int_0^t u\right)^\frac{1}{r}\|H_t\|_{L^p_v\to L^q_w}, & p\le r,\\
    \displaystyle \left(\int_0^\infty u(x)\left(\int_0^x u\right)^\frac{s}{p}\|H_{\sigma^{-1}(x),\sigma(x)}\|^s_{L^p_v\to L^q_w}\,dx\right)^\frac{1}{s}, & r<p
   \end{cases}
 \end{equation}
is finite. Moreover, $C_T\approx A_0+A_1+A_2.$
\end{thm}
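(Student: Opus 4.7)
My proof plan is to establish the two-sided estimate $C_T\approx A_0+A_1+A_2$ by treating the necessity and sufficiency of each of the three terms separately; the main tools are Oinarov's condition \eqref{O} together with the fact that, by construction, $\int u$ doubles between consecutive points of the grid $\{\sigma^m(x_0)\}_{m\in\mathbb{Z}}$. For necessity of $A_0$, I would use that for $y\ge x\ge z$ Oinarov's condition gives $k(y,z)\ge D^{-1}k(x,z)$, so
\begin{equation*}
(Tf)(x)\ge D^{-1}\left(\int_0^x k(x,z)f(z)\,dz\right)\left(\int_x^\infty w\right)^{1/q},
\end{equation*}
and integrating the $r$-th power against $u$ yields $A_0\lesssim C_T$. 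For $A_1$, the companion bound $k(y,z)\ge D^{-1}k(y,x)$ for $y\ge x\ge z$ gives
\begin{equation*}
(Tf)(x)\ge D^{-1}\left(\int_0^x f\right)\left(\int_x^\infty k(y,x)^q w(y)\,dy\right)^{1/q},
\end{equation*}
whence $A_1\lesssim C_T$; the $q=\infty$ versions \eqref{A0inf}, \eqref{A1inf} follow identically with $\esssup$ in place of the $y$-integral. For $A_2$, I would observe that $(Tf)(t)=\|H_tf\|_{L^q_w}$ and $(Tf)$ is non-increasing, so for every $t>0$
\begin{equation*}
C_T\|f\|_{L^p_v}\ge\|Tf\|_{L^r_u}\ge\left(\int_0^t u\right)^{1/r}\|H_tf\|_{L^q_w},
\end{equation*}
which after optimising over $f$ yields the $p\le r$ branch of \eqref{A2}; in the regime $r<p$ a Landau-type extremal argument applied on the dyadic annuli $[\sigma^{m-1}(t),\sigma^m(t)]$, with the outer sum saturated by choosing $f=\sum_m f_m$ with disjointly supported near-extremisers of the local norms, produces the $\ell^s$-summation appearing in the second branch of \eqref{A2}.

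For sufficiency, my plan is to fix a base point $x_0$ and use the dyadic grid $x_m=\sigma^m(x_0)$, $m\in\mathbb{Z}$, so that $\int_{x_m}^{x_{m+1}}u=\int_0^{x_m}u$ and $Tf$ is monotone. On each interval $y\in[x_j,x_{j+1})$ I would split the inner integral at $x_{j-1}$ and apply the upper half of \eqref{O} to the piece with $z<x_{j-1}<y$:
\begin{equation*}
\int_0^y k(y,z)f(z)\,dz\le D\,k(y,x_{j-1})\int_0^{x_{j-1}}f+D\int_0^{x_{j-1}}k(x_{j-1},z)f(z)\,dz+\int_{x_{j-1}}^y k(y,z)f(z)\,dz,
\end{equation*}
the last term being precisely $(H_{x_j,x_{j+2}}f)(y)$. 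Inserting this into $(Tf)(x)^q$ for $x\in[x_k,x_{k+1})$, using $(Tf)(x)\le (Tf)(x_k)$ and the subadditivity of $a\mapsto a^q$ (up to a multiplicative constant depending only on $q$), the estimate decomposes into three blocks indexed by $j\ge k$: the first, governed by the factor $k(y,x_{j-1})$, reproduces the structure of \eqref{A1} evaluated on step versions of $f$; the second, whose inner integral $\int_0^{x_{j-1}}k(x_{j-1},\cdot)f$ is independent of $y$, reproduces \eqref{A0}; the third is purely local and equals $\|H_{x_j,x_{j+2}}f\|_{L^q_w}^q$. Raising to the power $r/q$, integrating against $u$, summing in $k$ and using the doubling $\int_0^{x_m}u\approx 2^{m-k}\int_0^{x_k}u$ to rearrange the double sum, the global blocks are absorbed into $(A_0+A_1)^r\|f\|^r_{L^p_v}$ through direct application of the assumed inequalities \eqref{A0}, \eqref{A1}, while the local block collapses to exactly the $\sup$ or $\ell^s$ form of \eqref{A2}.

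The main technical obstacle I anticipate is controlling the local block in the regime $r<p$: after the reductions above one is left with a discrete expression of the schematic form $\sum_m \bigl(\int_0^{x_m}u\bigr)\|H_{x_{m-1},x_{m+1}}\|^r_{L^p_v\to L^q_w}\|f_m\|^r_{L^p_v}$, which must be bounded by $A_2^r\|f\|^r_{L^p_v}$ via an $\ell^{s/r}$/$\ell^{p/r}$ Hölder split combined with the geometric decay from the doubling of $\int u$; the passage from this dyadic bound back to the continuous supremum and integral in \eqref{A2} then requires comparing $\|H_{x_{m-1},x_{m+1}}\|_{L^p_v\to L^q_w}$ with $\|H_{\sigma^{-1}(x),\sigma(x)}\|_{L^p_v\to L^q_w}$ uniformly on $x\in[x_{m-1},x_m]$, which follows from the monotonicity of $\sigma,\sigma^{-1}$ and the very definition of the grid. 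The case $q=\infty$ is then parallel, with the $y$-integrals replaced by $\esssup$ and the Oinarov splitting commuting with the essential supremum.
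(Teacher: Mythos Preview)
Your proposal is correct and follows essentially the same route as the paper: necessity of $A_0,A_1$ via the two halves of Oinarov's lower bound, necessity of $A_2$ via monotonicity of $Tf$ and (for $r<p$) a near-extremiser construction on the dyadic grid, and sufficiency via the $\sigma$-grid decomposition with the inner integral split at $a_{n-1}$, the global pieces absorbed by the $A_0$- and $A_1$-inequalities and the local piece by $A_2$ through Jensen or H\"older. The ``rearrange the double sum'' step you allude to is made explicit in the paper as the elementary relation $\sum_n 2^n\bigl(\sum_{i\ge n}\lambda_i\bigr)^{s}\approx\sum_n 2^n\lambda_n^{s}$, and in the $r<p$ necessity argument the paper's near-extremisers $f_n$ have bounded overlap (support in $[a_{n-2},a_{n+2}]$) rather than being disjointly supported, but neither point affects the validity of your outline.
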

 
\begin{proof} Let $n_0\in\Ze$ be such an integer that $2^{n_0}<\int_0^\infty u$. Put
\begin{align*}
a_{n_0}&:=\inf\left\{y>0:\int_0^y u\ge 2^{n_0}\right\},\\
a_{n+1}&:=\sigma(a_n)~\text{for}~n\ge n_0,\\
a_{n-1}&:=\sigma^{-1}(a_n)~\text{for}~n\le n_0.
\end{align*}
Denote $N:=\sup\{n\in\Ze:a_n<\infty\}.$ If  $N<\infty$ we put $a_{N+1}:=\infty.$ Observe, that $a_{n-1}=\sigma^{-1}(a_n)$ and $\sigma(a_n)=a_{n+1}$ for all $n\leq N.$ 

We suppose first that $q<\infty.$

{\it Sufficiency.} Let $\Delta_n:=[a_n,a_{n+1}).$ Applying the condition \eqref{O} and the relation (\cite{GHS}, Proposition 2.1)
\begin{equation}\label{sum}
\sum_{n\in\mathbb{Z}} 2^n\left(\sum_{i\geq n}\lambda_i\right)^s\approx \sum_{n\in\mathbb{Z}} 2^n\lambda_n^s,
\end{equation}
which is valid for all sequences $\{\lambda_n\}$ of non-negative numbers and any $s>0,$
we have
\begin{align*} 
\int_0^\infty [Tf]^ru&=\sum_{n\leq N}\int_{\Delta_n}[Tf]^ru
\approx \sum_{n\leq N}2^n\left(\int_{a_n}^\infty w(y)\left(\int_0^y k(y,z)f(z)dz\right)^qdy\right)^{\frac{r}{q}}\\
&\approx \sum_{n\leq N}2^n\left(\int_{\Delta_n}w(y)\left(\int_0^y k(y,z)f(z)dz\right)^qdy\right)^{\frac{r}{q}}\\
&\approx \sum_{n\leq N}2^n\left(\int_{\Delta_n}w(y)\left(\int_{a_{n-1}}^y k(y,z)f(z)dz\right)^qdy\right)^{\frac{r}{q}}\\
&+\sum_{n\leq N}2^n\left(\int_{\Delta_n}w(y)\left(\int_0^{a_{n-1}} k(y,z)f(z)dz\right)^qdy\right)^{\frac{r}{q}}=:J_1^r+J_2^r.
\end{align*}
Since $k(y,z)\approx k(y,x)+k(x,z)$ for $y\in \Delta_n$, $x\in \Delta_{n-1}$, $z\in (0,a_{n-1}),$ then $J_2^r$ is estimated as follows
\begin{align*}
&J_2^r\approx\sum_{n\leq N}\int_{a_{n-1}}^{a_n}u(x)dx\left(\int_{\Delta_n}w(y)\left(\int_0^{a_{n-1}} k(y,z)f(z)dz\right)^qdy\right)^{\frac{r}{q}}\\
&\approx\sum_{n\leq N}\int_{a_{n-1}}^{a_n}u(x)\left(\int_{a_n}^{a_{n+1}}w(y)[k(y,x)]^qdy\right)^{\frac{r}{q}}dx\left(\int_0^{a_{n-1}}f\right)^r\\
&+\sum_{n\leq N}\int_{a_{n-1}}^{a_n}u(x)\left(\int_0^{a_{n-1}} k(x,z)f(z)dz\right)^rdx\left(\int_{\Delta_n}w\right)^{\frac{r}{q}}\\
&\lesssim
\sum_{n\leq N}\int_{a_{n-1}}^{a_n}u(x)\left(\int_x^\infty w(y)[k(y,x)]^qdy\right)^{\frac{r}{q}}\left(\int_0^xf\right)^rdx\\
&+\sum_{n\leq N}\int_{a_{n-1}}^{a_n}u(x)\left(\int_x^\infty w\right)^{\frac{r}{q}}\left(\int_0^xk(x,z)f(z)dz\right)^rdx\\
&\lesssim
(A_1^r+A_0^r)\|f\|^r_{L^p_v}.
\end{align*}
For an upper bound of $J_1^r$ we write
\begin{align*}
J_1^r&\approx\sum_{n\leq N}2^n\|H_{a_n,a_{n+1}}f\|_{L^q_w}^r\\
&
\lesssim \sum_{n\leq N}\left(\int_{a_{n-1}}^{a_n}u\right)\|H_{a_n,a_{n+1}}\|_{L^p_v\to L^q_w}^r\left(\int_{a_{n-1}}^{a_{n+1}} f^pv\right)^\frac{r}{p}.
\end{align*}
If $p\le r$ we apply Jensen's inequality and get
$$
J_1\lesssim \sup_{n\leq N}\left(\int_{a_{n-1}}^{a_n}u\right)^\frac{1}{r}\|H_{a_n,a_{n+1}}\|_{L^p_v\to L^q_w}\|f\|_{L^p_v}\le A_2\|f\|_{L^p_v}.
$$
If $r<p$ we apply H\"{o}lder's inequality with the exponents $\frac{s}{r}$ and $\frac{p}{r}$ and obtain
\begin{align*} J_1^s&\lesssim \sum_{n\leq N}\left(\int_{a_{n}}^{a_{n+1}}u\right)^\frac{s}{r}\|H_{a_n,a_{n+1}}\|_{L^p_v\to L^q_w}^s\|f\|_{L^p_v}^s\\
 &\lesssim \sum_{n\leq N}\left(\int_{a_n}^{a_{n+1}}u\right) \left(\int_0^{a_n}u\right)^\frac{s}{p}\|H_{\sigma^{-1}(a_{n+1}),\sigma(a_n)}\|_{L^p_v\to L^q_w}^s\|f\|_{L^p_v}^s\\
 &\le  \sum_{n\leq N}\left(\int_{a_n}^{a_{n+1}}u(x)\left(\int_0^xu\right)^\frac{s}{p}\|H_{\sigma^{-1}(x),\sigma(x)}\|_{L^p_v\to L^q_w}^s\,dx\right)\|f\|_{L^p_v}^s\\
&\le A_2^s\|f\|_{L^p_v}^s.
\end{align*}
Thus,
\beqn
 \left\|Tf\right\|_{L^r_u} \lesssim(A_0+A_1+A_2)\|f\|_{L^p_v}
 \nonumber
\eeqn
and the upper bound $C_T\lesssim A_0+A_1+A_2$ is proved.

{\it Necessity.} Since 
\begin{align*}
(Tf)(x)&\geq\left(\int_x^\infty w(y)\left(\int_0^x k(y,z)f(z)dz\right)^qdy\right)^{\frac{1}{q}}\\
&\gtrsim
\left(\int_x^\infty w\right)^{\frac{1}{q}}\int_0^x k(x,z)f(z)dz,
\end{align*} 
the inequality \eqref{Tin} implies \eqref{A0} and $C_T\gtrsim A_0$.
Moreover,  
\begin{align*}
(Tf)(x)&\ge \left(\int_x^\infty w(y)\left(\int_0^x k(y,z)f(z)dz\right)^qdy\right)^{\frac{1}{q}}\\
&\gtrsim
\left(\int_x^\infty [k(y,x)]^qw(y)dy\right)^{\frac{1}{q}}\int_0^x f.
\end{align*}
Then \eqref{Tin} implies \eqref{A1} and $C_T\gtrsim A_1$.
It follows from \eqref{Tin} that
$$
C_T\|f\|_{L^p_v}\ge \left(\int_0^t u\right)^\frac{1}{r}\|H_tf\|_{L^q_w},\,\,f\in\Msp
$$
for any $t\in(0,\infty).$ Hence,
$$
C_T\ge  \sup_{t\in(0,\infty)}\left(\int_0^t u\right)^\frac{1}{r}\|H_t\|_{L^p_v\to L^q_w}
$$
and the lower bound $C_T\gtrsim A_2$ is proved for $p\le r.$   
Now, let $r<p$. We have
\begin{align*}
A_2^s&=\int_0^\infty u(x)\left(\int_0^x u\right)^\frac{s}{p}\|H_{\sigma^{-1}(x),\sigma(x)}\|^s_{L^p_v\to L^q_w}\,dx\\
&=
\sum_{n\leq N}\int_{a_n}^{a_{n+1}}u(x)\left(\int_0^x u\right)^\frac{s}{p}\|H_{\sigma^{-1}(x),\sigma(x)}\|^s_{L^p_v\to L^q_w}\,dx\\
&\le\sum_{n\leq N}\left(\int_{a_n}^{a_{n+1}}u\right)\left(\int_0^{a_{n+1}}u\right)^\frac{s}{p}\|H_{\sigma^{-1}(a_n),\sigma(a_{n+1})}\|_{L^p_v\to L^q_w}^s\\
&\approx \sum_{n\leq N} (2^n)^\frac{s}{r}\|H_{a_{n-1},a_{n+2}}\|_{L^p_v\to L^q_w}^s=:\bar{A}_2^s.
\end{align*}
Let $\theta\in(0,1)$ be arbitrary. For all $n\leq N$ there is $f_n\in\Msp$ such that $\textrm{supp} f_n\subset [a_{n-2},a_{n+2}]$, $\|f_n\|_{L^p_v}=1$ and $$
\|H_{a_{n-1},a_{n+2}}f_n\|_{L^q_w}\ge \theta\|H_{a_{n-1},a_{n+2}}\|_{L^p_v\to L^q_w}.
$$
Put
$$
g_n:=(2^n)^\frac{s}{pr}\|H_{a_{n-1},a_{n+2}}\|_{L^p_v\to L^q_w}^\frac{s}{p}f_n,~~~g:=\sum_{n\leq N} g_n.
$$
We find
\begin{align*}
\|g\|_{L^p_v}^p&=\sum_{j\leq N}\int_{a_j}^{a_{j+1}}\left(\sum_{n\leq N} g_n(x)\right)^pv(x)\,dx\\
&=\sum_{j\leq N}\int_{a_j}^{a_{j+1}}\left(\sum_{n=j-1}^{j+2} g_n(x)\right)^pv(x)\,dx\\
&\lesssim \sum_{j\leq N}\int_{a_{j-2}}^{a_{j+2}}g_j(x)^pv(x)\,dx\\
&=\sum_{j\leq N}(2^j)^\frac{s}{r}\|H_{a_{j-1},a_{j+2}}\|_{L^p_v\to L^q_w}^s=\bar{A}_2^s.
\end{align*}
Finally, applying \eqref{Tin}
\begin{align*}
C_T^r\bar{A}_2^{\frac{sr}{p}}&\gtrsim C_T^r\|g\|_{L^p_v}^r\geq\int_0^\infty[Tg]^ru
\ge\sum_{n\leq N} \int_{a_{n-2}}^{a_{n-1}} [Tg]^ru\\
&\ge \sum_{n\leq N} \left(\int_{a_{n-2}}^{a_{n-1}} u\right)\|H_{a_{n-1},a_{n+2}}g\|_{L^q_w}^r
\gtrsim\sum_{n\leq N} 2^n\|H_{a_{n-1},a_{n+2}}g_n\|_{L^q_w}^r\\
&=\sum_{n\leq N} (2^n)^\frac{s}{r}\|H_{a_{n-1},a_{n+2}}\|_{L^p_v\to L^q_w}^\frac{sr}{p}\|H_{a_{n-1},a_{n+2}}f_n\|_{L^q_w}^r\\
&\ge \theta^r \sum_{n\leq N} (2^n)^\frac{s}{r}\|H_{a_{n-1},a_{n+2}}\|_{L^p_v\to L^q_w}^s\gtrsim\theta^r\bar{A}_2^s.
\end{align*}
Thus, $C_T\gtrsim \theta\bar{A}_2.$ Hence, $C_T\gtrsim \theta A_2$ and the required lower bound $C_T\gtrsim A_0+A_1+ A_2$ follows.

The case $q=\infty$ is treated similarly with only replacement of \eqref{sum} by a trivial modification
\begin{equation}\label{summ}
\sum_{n\in\mathbb{Z}} 2^n\left(\sup_{i\geq n}\lambda_i\right)^s\approx \sum_{n\in\mathbb{Z}} 2^n\lambda_n^s.
\end{equation}
\end{proof}

\begin{rem} For $p=\infty$ we have 
\beqn\label{Tpinf}
C_T=\left\|T\left(\frac{1}{v}\right)\right\|_{L^r_u}
\eeqn
and for $r=\infty$
\beqn\label{Trinf}
C_T=\sup_{t\geq 0}U(t)\left\|H_t\right\|_{L^p_v\to L^q_w}, 
\eeqn
where $U(t):=\underset{0\leq x\leq t}{\mathrm{ess\,sup}}\,u(x).$
\end{rem}

Now, for $0<c<d\le\infty$ and $f\in\Msp$ we put
\begin{align*}
(H^*_{c,d} f)(x)&:=\chi_{[c,d)}(x)\int_x^{\sigma(d)} k(z,x)f(z)dz,\\
 (H^*_c f)(x)&:=\chi_{[c,\infty)}(x)\int_x^\infty k(z,x)f(z)dz.
\end{align*}

\begin{thm}
Let $1\le p<\infty$,  $0<r<\infty$, $0<q\le\infty$, $\frac{1}{s}:=\left(\frac{1}{r}-\frac{1}{p}\right)_+$. Then the inequality \eqref{Sin} is fulfilled if and only if the inequalities 
\begin{align}
&\left(\int_0^\infty u(x)\left(\int_x^{\sigma^2(x)}w\right)^\frac{r}{q}\left(\int_{\sigma^2(x)}^\infty k(z,\sigma^2(x))f(z)dz\right)^r\,dx\right)^\frac{1}{r}\le \mathbb{A}_0\|f\|_{L^p_v},\label{mA0}\\
&\left(\int_0^\infty u(x)\left(\int_x^{\sigma^2(x)}[k(\sigma^2(x),z)]^qw(z)dz\right)^{\frac{r}{q}}\left(\int_{\sigma^2(x)}^\infty f\right)^r\,dx\right)^\frac{1}{r}\le \mathbb{A}_1\|f\|_{L^p_v},\label{mA1}
\end{align}
if $q<\infty$ or
\begin{align}
&\left(\int_0^\infty u(x)[\underset{y\in(x,\sigma^2(x))}{\mathrm{ess\,sup}}w(y)]^r\left(\int_{\sigma^2(x)}^\infty k(z,\sigma^2(x))f(z)dz\right)^r\,dx\right)^\frac{1}{r}\le \mathbb{A}_0\|f\|_{L^p_v},\label{mA0inf}\\
&\left(\int_0^\infty u(x)[\underset{y\in(x,\sigma^2(x))}{\mathrm{ess\,sup}}[w(y)k(\sigma^2(x),y)]^r\left(\int_{\sigma^2(x)}^\infty f\right)^r\,dx\right)^\frac{1}{r}\le \mathbb{A}_1\|f\|_{L^p_v}\label{mA1inf}
\end{align}
for $q=\infty$ hold for all $f\in \Msp$ and the constant 
\begin{equation}\label{mA2}
 \mathbb{A}_2:=\begin{cases}
    \displaystyle\sup_{t\in(0,\infty)}\left(\int_0^t u\right)^\frac{1}{r}\|H_t^*\|_{L^p_v\to L^q_w}, & p\le r,\\
    \displaystyle \left(\int_0^\infty u(x)\left(\int_0^x u\right)^\frac{s}{p}\|H^*_{\sigma^{-1}(x),\sigma(x)}\|^s_{L^p_v\to L^q_w}\,dx\right)^\frac{1}{s}, & r<p
   \end{cases}
\end{equation}
is finite. Moreover, $C_S\approx \mathbb{A}_0+\mathbb{A}_1+\mathbb{A}_2.$
\end{thm}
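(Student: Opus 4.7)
The plan is to follow the proof of Theorem~\ref{theorem11} verbatim, swapping the direction of the outer integration (now $\int_x^\infty$) with the inner one (now $\int_y^\infty$). I keep the same partition $\Delta_n=[a_n,a_{n+1})$ generated by $\sigma$ and treat $q<\infty$; the case $q=\infty$ is handled by \eqref{summ} in place of \eqref{sum}.

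For sufficiency, since $x\mapsto(Sf)(x)$ is non-increasing, one has $\int_{\Delta_n}[Sf]^r u\approx 2^n[(Sf)(a_n)]^r$; writing $[(Sf)(a_n)]^q=\sum_{m\geq n}\int_{\Delta_m}w(y)(\int_y^\infty k(z,y)f(z)\,dz)^q\,dy$ and applying \eqref{sum} with exponent $r/q$ reduces $\|Sf\|_{L^r_u}^r$ to
\[
\sum_{n\leq N}2^n\left(\int_{\Delta_n}w(y)\left(\int_y^\infty k(z,y)f(z)\,dz\right)^q dy\right)^{r/q}.
\]
For $y\in\Delta_n$ split $\int_y^\infty=\int_y^{a_{n+2}}+\int_{a_{n+2}}^\infty$. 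The first piece equals $(H^*_{a_n,a_{n+1}}f)(y)$, so its total contribution is $\sum_{n\leq N}2^n\|H^*_{a_n,a_{n+1}}f\|_{L^q_w}^r$, which is bounded by $\mathbb{A}_2\|f\|_{L^p_v}$ via Jensen's inequality when $p\leq r$ and via H\"older's inequality with exponents $s/r,p/r$ when $r<p$ (using the monotonicity $\|H^*_{a_n,a_{n+1}}\|\leq\|H^*_{\sigma^{-1}(x),\sigma(x)}\|$ for $x\in\Delta_n$), exactly as in the $J_1$ analysis of Theorem~\ref{theorem11}. For the second piece, Oinarov's condition \eqref{O} yields $k(z,y)\approx k(z,a_{n+2})+k(a_{n+2},y)$ for $z\geq a_{n+2}\geq y$, so it splits as
\[
\int_{a_{n+2}}^\infty k(z,y)f(z)\,dz\approx\int_{a_{n+2}}^\infty k(z,a_{n+2})f(z)\,dz+k(a_{n+2},y)\int_{a_{n+2}}^\infty f(z)\,dz,
\]
and upon replacing $a_{n+2}$ by $\sigma^2(x)$ for $x\in\Delta_n$ (permissible by the kernel's monotonicity in each slot) the two resulting sums are the discretizations of $\mathbb{A}_0^r\|f\|_{L^p_v}^r$ and $\mathbb{A}_1^r\|f\|_{L^p_v}^r$, respectively.

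For necessity, the lower Oinarov bounds $k(z,y)\geq D^{-1}k(z,\sigma^2(x))$ and $k(z,y)\geq D^{-1}k(\sigma^2(x),y)$, valid for $y\leq\sigma^2(x)\leq z$, yield at once $C_S\gtrsim\mathbb{A}_0$ and $C_S\gtrsim\mathbb{A}_1$ by restricting $(Sf)(x)$ to $y\in(x,\sigma^2(x))$ and $z\geq\sigma^2(x)$ and keeping only one of the two summands in the Oinarov decomposition. For $\mathbb{A}_2$ the test-function construction of Theorem~\ref{theorem11} carries over with $H$ replaced by $H^*$ and the index shifts adjusted so that $(Sg)(x)\geq\|H^*_{a_{n-1},a_{n+2}}g\|_{L^q_w}$ holds for $x\in\Delta_{n-2}$. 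The main obstacle is precisely this bookkeeping: because the $S$ operator pairs the outer $y$-integration with $\int_y^\infty$, the test intervals where $(Sg)$ is evaluated must lie to the \emph{left} of the $g_n$-supports (opposite to the $T$ case), and one must verify that the inner $z$-integration up to $\sigma(a_{n+2})=a_{n+3}$ still captures the full support of $g_n$, while retaining the bounded overlap needed in the estimate of $\|g\|_{L^p_v}^p$. With these adjustments the remaining computations are a word-for-word transcription of the $T$-case, concluding in $C_S\gtrsim\theta\mathbb{A}_2$ for arbitrary $\theta\in(0,1)$ and hence $C_S\gtrsim\mathbb{A}_0+\mathbb{A}_1+\mathbb{A}_2$.
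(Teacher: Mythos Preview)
Your outline matches the paper's proof almost step for step: the same dyadic partition, the same split $\int_y^\infty=\int_y^{a_{n+2}}+\int_{a_{n+2}}^\infty$, the same Oinarov decomposition at the cut point, and the same test-function construction for the necessity of $\mathbb{A}_2$ (with the supports of $f_n$ shifted to $[a_{n-1},a_{n+3}]$, as you correctly note).

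One small index slip in the sufficiency part: when you ``replace $a_{n+2}$ by $\sigma^2(x)$ for $x\in\Delta_n$'', the inequalities run the wrong way. For $x\in\Delta_n$ one has $\sigma^2(x)\ge a_{n+2}$, so $\int_{\sigma^2(x)}^\infty\le\int_{a_{n+2}}^\infty$ and the domination by the $\mathbb{A}_0$-integrand fails. The paper instead writes $2^n\approx\int_{\Delta_{n-1}}u$ and takes $x\in\Delta_{n-1}$; then $\sigma^2(x)\in[a_{n+1},a_{n+2})$, giving $\Delta_n\subset(x,\sigma^2(x))$ and $\int_{a_{n+2}}^\infty\le\int_{\sigma^2(x)}^\infty$, which is the right direction. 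With that one-index shift your argument is complete and coincides with the paper's.
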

\begin{proof}
Let the sequence $\{a_n\}$ be the same as in the proof of Theorem \ref{theorem11} and $q<\infty.$

{\it Sufficiency.} We have
\begin{align*} 
\int_0^\infty [Sf]^ru&=\sum_{n\leq N}\int_{\Delta_n}[Sf]^ru
\approx \sum_{n\leq N}2^n\left(\int_{a_n}^\infty w(y)\left(\int_y^\infty k(z,y)f(z)dz\right)^qdy\right)^{\frac{r}{q}}\\
&\approx \sum_{n\leq N}2^n\left(\int_{\Delta_n}w(y)\left(\int_y^\infty k(z,y)f(z)dz\right)^qdy\right)^{\frac{r}{q}}\\
&\approx \sum_{n\leq N}2^n\left(\int_{\Delta_n}w(y)\left(\int_y^{a_{n+2}} k(z,y)f(z)dz\right)^qdy\right)^{\frac{r}{q}}\\
&+\sum_{n\leq N}2^n\left(\int_{\Delta_n}w(y)\left(\int_{a_{n+2}}^\infty k(z,y)f(z)dz\right)^qdy\right)^{\frac{r}{q}}=:I_1^r+I_2^r.
\end{align*}
Since for $y\in \Delta_n$, $x\in \Delta_{n-1}$, $z\in (a_{n+2},\infty)$ it holds $k(z,y)\approx k(z,\sigma^2(x))+k(\sigma^2(x),y)$, then the term $I_2^r$ is estimated as follows

\begin{align*}
I_2^r&\lesssim
\sum_{n\leq N}\int_{a_{n-1}}^{a_n} u(x)\left(\int_{\Delta_n}w(y)\left(\int_{a_{n+2}}^\infty k(z,y)f(z)dz\right)^qdy\right)^{\frac{r}{q}}dx
\\
&\lesssim\sum_{n\leq N}\int_{a_{n-1}}^{a_n} u(x)\left(\int_{\Delta_n}w\right)^{\frac{r}{q}}\left(\int_{\sigma^2(x)}^\infty k(z,\sigma^2(x))f(z)dz\right)^rdx
\\
&+\sum_{n\leq N}\int_{a_{n-1}}^{a_n} u(x)
\left(\int_{\Delta_n}[k(\sigma^2(x),y)]^qw(y)dy\right)^{\frac{r}{q}}\left(\int_{\sigma^2(x)}^\infty f\right)^rdx\\
&\lesssim\sum_{n\leq N}\int_{a_{n-1}}^{a_n} u(x)\left(\int_x^{\sigma^2(x)}w\right)^{\frac{r}{q}}\left(\int_{\sigma^2(x)}^\infty k(z,\sigma^2(x))f(z)dz\right)^rdx
\\
&+\sum_{n\leq N}\int_{a_{n-1}}^{a_n} u(x)
\left(\int_x^{\sigma^2(x)}[k(\sigma^2(x),y)]^qw(y)dy\right)^{\frac{r}{q}}\left(\int_{\sigma^2(x)}^\infty f\right)^rdx\\
&\leq(\mathbb{A}_0^r+\mathbb{A}_1^r)\left(\int_0^\infty f^pv\right)^\frac{r}{p}.
\end{align*}
To estimate $I_1^r$ we write
$$
I_1^r\lesssim\sum_{n\leq N}\left(\int_{a_{n-1}}^{a_n}u\right)\|H^*_{a_n,a_{n+1}}\|_{L^p_v\to L^q_w}^r\left(\int_{a_n}^{a_{n+2}} f^pv\right)^\frac{r}{p}.
$$
If $p\le r,$ by Jensen's inequality
$$
I_1\lesssim \sup_{n\leq N}
\left(\int_{a_{n-1}}^{a_n}u\right)^\frac{1}{r}\|H^*_{a_n,a_{n+1}}\|_{L^p_v\to L^q_w}\|f\|_{L^p_v}\leq \mathbb{A}_2\|f\|_{L^p_v}.
$$
If $r<p,$ applying H\"{o}lder's inequality with exponents $\frac{s}{r}$ and $\frac{p}{r}$ similar to the proof of Theorem~\ref{theorem11}, we find
\begin{align*}
I_1^s&\lesssim \sum_{n\leq N}
\left(\int_{a_{n}}^{a_{n+1}}u\right)^\frac{s}{r}\|H^*_{a_n,a_{n+1}}\|_{L^p_v\to L^q_w}^s\|f\|_{L^p_v}^s\\
 &\lesssim  \sum_{n\leq N}
\left(\int_{a_n}^{a_{n+1}}u(x)\left(\int_0^xu\right)^\frac{s}{p}\|H^*_{\sigma^{-1}(x),\sigma(x)}\|_{L^p_v\to L^q_w}^s\,dx\right)\|f\|_{L^p_v}^s\\
&\le \mathbb{A}_2^s\|f\|_{L^p_v}^s.
\end{align*}
Thus, $C_S\lesssim \mathbb{A}_0+\mathbb{A}_1+\mathbb{A}_2.$
 
{\it Necessity.} Since 
$$
(Sf)(x)\gtrsim \|\chi_{[x,\sigma^2(x))}\|_{L^q_w}\int_{\sigma^2(x)}^\infty k(z,\sigma^2(x))f(z)dz,
$$ 
then \eqref{Sin} implies \eqref{mA0} and $C_S\gtrsim \mathbb{A}_0$.
Also,  
\begin{align*}
(Sf)(x)&\ge \left(\int_x^{\sigma^2(x)}w(y)\left(\int_{\sigma^2(x)}^\infty k(z,y)f(z)dz\right)^q\right)^{\frac{1}{q}}\\
&\gtrsim
\left(\int_x^{\sigma^2(x)}w(y)k(\sigma^2(x),y)dy\right)^{\frac{1}{q}}\int_{\sigma^2(x)}^\infty f.
\end{align*}
Therefore, \eqref{Sin} implies \eqref{mA1} and $C_S\gtrsim \mathbb{A}_1.$

Now, let $t\in(0,\infty)$ be fixed. It follows from \eqref{Sin}
$$
C_S\|f\|_{L^p_v}\ge \left(\int_0^t u\right)^\frac{1}{r}\|H^*_tf\|_{L^q_w},\,\,f\in\Msp.
$$
Hence,
$$
C_S\ge  \sup_{t\in(0,\infty)}\left(\int_0^t u\right)^\frac{1}{r}\|H_t^*\|_{L^p_v\to L^q_w}
$$
and $C_S\gtrsim \mathbb{A}_2$ for $p\le r$ is shown.   
   
Now, let $r<p$. As in the proof of Theorem~\ref{theorem11} we find
\begin{align*}
\mathbb{A}_2^s&=\int_0^\infty u(x)\left(\int_0^x u\right)^\frac{s}{p}\|H^*_{\sigma^{-1}(x),\sigma(x)}\|^s_{L^p_v\to L^q_w}\,dx\\
&\lesssim \sum_{n\leq N} (2^n)^\frac{s}{r}\|H^*_{a_{n-1},a_{n+2}}\|_{L^p_v\to L^q_w}^s.
\end{align*}
Let $\theta\in(0,1)$ be arbitrary. For all $n\leq N$ there is $f_n\in\Msp$ such that $\textrm{supp} f_n\subset [a_{n-1},a_{n+3}]$, $\|f_n\|_{L^p_v}=1$ and $$
\|H^*_{a_{n-1},a_{n+2}}f_n\|_{L^q_w}\ge \theta\|H^*_{a_{n-1},a_{n+2}}\|_{L^p_v\to L^q_w}.
$$
Put 
$$
g_n:=(2^n)^\frac{s}{pr}\|H^*_{a_{n-1},a_{n+2}}\|_{L^p_v\to L^q_w}^\frac{s}{p}f_n,~~~g:=\sum_{n\leq N} g_n.
$$
Then
\begin{align*}
\|g\|_{L^p_v}^p&=\sum_{j\leq N}\int_{a_j}^{a_{j+1}}\left(\sum_{n\leq N} g_n(x)\right)^pv(x)\,dx\\
&=\sum_{j\leq N}\int_{a_j}^{a_{j+1}}\left(\sum_{n=j-2}^{j+1} g_n(x)\right)^pv(x)\,dx\\
&\lesssim\sum_{j\leq N}\int_{a_{j-1}}^{a_{j+3}}g_j(x)^pv(x)\,dx=
\sum_{j\leq N}(2^j)^\frac{s}{r}\|H^*_{a_{j-1},a_{j+2}}\|_{L^p_v\to L^q_w}^s.
\end{align*}
Now,
$$
\int_0^\infty [Sg]^ru\gtrsim\theta^r \sum_{n\leq N} (2^n)^\frac{s}{r}\|H^*_{a_{n-1},a_{n+2}}\|_{L^p_v\to L^q_w}^s
$$
and we obtain $C_S\gtrsim \mathbb{A}_2$ for $r<p$ and $C_S\gtrsim \mathbb{A}_0+\mathbb{A}_1+\mathbb{A}_2$ similar to the proof of Theorem~\ref{theorem11}.  The case $q=\infty$ is proved analogously. 
\end{proof}

\begin{rem} Precise characterization of the inequalities \eqref{A0}-\eqref{A1inf}, \eqref{mA0}-\eqref{mA1inf}, sharp  estimates of the norms $\left\|H_t\right\|_{L^p_v\to L^q_w},$ $\left\|H_{\sigma^{-1}(x),\sigma(x)}\right\|_{L^p_v\to L^q_w},$  $\left\|H^\ast_t\right\|_{L^p_v\to L^q_w}$ and $\left\|H^\ast_{\sigma^{-1}(x),\sigma(x)}\right\|_{L^p_v\to L^q_w}$ 
are known and can be found (in various, but equivalent forms) by using, for instance, the results of \cite{SU1}, \cite{SU2} and \cite{P4}, where an integral form of criterion for the case $0<q<1$ was found.
\end{rem} 

\begin{rem} For $p=\infty$ we have 
\beqn\label{Spinf}
C_S=\left\|S\left(\frac{1}{v}\right)\right\|_{L^r_u}
\eeqn
and for $r=\infty$
\beqn\label{Srinf}
C_S=\sup_{t\geq 0}U(t)\left\|H^\ast_t\right\|_{L^p_v\to L^q_w}, 
\eeqn
where $U(t):=\underset{0\leq x\leq t}{\mathrm{ess\,sup}}\,u(x).$
\end{rem}

\section{Operators ${\cal T}$ and ${\cal S}$\label{calTS}}

For finding criteria for \eqref{calTin} and \eqref{calSin} we suppose that $0<\int_t^\infty u<\infty$  for all $t>0$ and define the functions $\zeta:[0,\infty)\to [0,\infty)$, $\zeta^{-1}:[0,\infty)\to [0,\infty)$ by
$$
\zeta(x):=\sup\left\{y>0:\int_y^\infty u\ge \frac{1}{2}\int_x^\infty u\right\},$$
$$
\zeta^{-1}(x):=\sup\left\{y>0:\int_y^\infty u\ge 2\int_x^\infty u\right\},
$$
where $\sup\varnothing=0$. Let, also, $\zeta^{m}$, $m\in \Na$ be a composition of $m$ functions $\zeta$ and similar for  $\zeta^{-m}.$ 
For $0\le c<d<\infty$ and $f\in\mathfrak{M}^+$ put
\begin{align*}
({\cal H}_{c,d}f)(x):=\chi_{(c,d]}(x)\int_x^{\zeta(d)}k(z,x)f(z)dz,\\
({\cal H}_d f)(x):=\chi_{(0,d]}(x)\int_x^\infty k(z,x)f(z)dz,\\
({\cal H}^*_{c,d} f)(x):=\chi_{(c,d]}(x)\int_{\zeta^{-1}(c)}^x k(x,z)f(z)dz,\\
 ({\cal H}^*_d f)(x):=\chi_{(0,d]}(x)\int_0^x k(x,z)f(z)dz.
\end{align*}

Similar to the previous section we prove the following theorems.

\begin{thm}\label{theorem31} Let $1\le p<\infty$,  $0<r<\infty$, $0<q\le\infty,$ $\frac{1}{s}:=\left(\frac{1}{r}-\frac{1}{p}\right)_+$.   For validity of the inequality \eqref{calTin} it is necessary and sufficient that the inequalities 
\begin{align}
 &\left(\int_0^\infty u(x)\left(\int_0^x w\right)^{\frac{r}{q}}\left(\int_x^\infty k(z,x)f(z)dz\right)^r\,dx\right)^\frac{1}{r}\le {\cal A}_0\|f\|_{L^p_v},\label{calA0}
\\
&\left(\int_0^\infty u(x)\left(\int_0^x [k(x,y)]^qw(y)dy\right)^{\frac{r}{q}}\left(\int_x^\infty f\right)^r\,dx\right)^\frac{1}{r}\le {\cal A}_1\|f\|_{L^p_v},\nonumber
\end{align}
if $q<\infty$ or
\begin{align*}
&\left(\int_0^\infty u(x)[\underset{y\in(0,x)}{\mathrm{ess\,sup}}~w(y)]^r\left(\int_x^\infty k(z,x)f(z)dz\right)^r\,dx\right)^\frac{1}{r}\le {\cal A}_0\|f\|_{L^p_v},\\
&\left(\int_0^\infty u(x)[\underset{y\in(0,x)}{\mathrm{ess\,sup}}[w(y)k(x,y)]]^r\left(\int_x^\infty f\right)^r\,dx\right)^\frac{1}{r}\le {\cal A}_1\|f\|_{L^p_v}
\end{align*}   
for $q=\infty$ hold for all $f\in \Msp$ and the constant 
\begin{equation}\label{calA2}
{\cal A}_2:=\begin{cases}
    \displaystyle\sup_{t\in(0,\infty)}\left(\int_t^\infty  u\right)^\frac{1}{r}\|{\cal H}_t\|_{L^p_v\to L^q_w}, & p\le r,\\
    \displaystyle \left(\int_0^\infty u(x)\left(\int_x^\infty u\right)^\frac{s}{p}\|{\cal H}_{\zeta^{-1}(x),\zeta(x)}\|^s_{L^p_v\to L^q_w}\,dx\right)^\frac{1}{s}, & r<p,
\end{cases}
\end{equation}
is finite. Moreover, $C_{\cal T}\approx{\cal A}_0+{\cal A}_1+{\cal A}_2.$
\end{thm}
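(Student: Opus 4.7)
The plan is to mirror the proof of Theorem~\ref{theorem11}, replacing the left-dyadic partition $\{a_n\}$ generated by $\sigma$ with a right-dyadic partition $\{b_n\}$ generated by $\zeta$, to reflect the fact that now $({\cal T}f)(x)$ is non-decreasing in $x$ and the ``large'' part of $u$ controlling $\|\cdot\|_{L^r_u}$ lies to the right of $x$. Concretely, I would fix $n_0$ with $2^{-n_0}<\int_0^\infty u$, choose $b_{n_0}$ with $\int_{b_{n_0}}^\infty u=2^{-n_0}$, iterate $b_{n+1}:=\zeta(b_n)$ and $b_{n-1}:=\zeta^{-1}(b_n)$, and work with the blocks $\Delta_n:=(b_{n-1},b_n]$, which satisfy $\int_{\Delta_n}u\approx 2^{-n}$.

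For sufficiency, I would first use monotonicity of ${\cal T}f$ together with the reversed form of \eqref{sum} (obtained by the substitution $n\mapsto -n$, giving $\sum 2^{-n}(\sum_{i\le n}\lambda_i)^s\approx\sum 2^{-n}\lambda_n^s$) to reduce
\[
\int_0^\infty [{\cal T}f]^r u\approx\sum_{n\le N}2^{-n}\left(\int_{\Delta_n}w(y)G(y)^q\,dy\right)^{r/q},\qquad G(y):=\int_y^\infty k(z,y)f(z)\,dz.
\]
For $y\in\Delta_n$ the splitting $G(y)=\int_y^{b_{n+1}}k(z,y)f(z)\,dz+\int_{b_{n+1}}^\infty k(z,y)f(z)\,dz$ isolates a local piece equal to $({\cal H}_{b_{n-1},b_n}f)(y)$ and a non-local piece in which Oinarov ($k(z,y)\lesssim k(z,x)+k(x,y)$ with midpoint $x\in\Delta_{n+1}$, so $y\le x\le z$) produces two summands; averaging $2^{-n}\approx\int_{\Delta_{n+1}}u(x)\,dx$ and enlarging $\int_{b_{n+1}}^\infty$ to $\int_x^\infty$ bounds them by ${\cal A}_0\|f\|_{L^p_v}$ and ${\cal A}_1\|f\|_{L^p_v}$, exactly as in the treatment of $J_2$ in Theorem~\ref{theorem11}. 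The local sum $\sum 2^{-n}\|{\cal H}_{b_{n-1},b_n}f\|_{L^q_w}^r$ I would handle by the familiar $p\le r$/$r<p$ dichotomy (Jensen, or H\"{o}lder with exponents $s/r$ and $p/r$), matched to the two branches of \eqref{calA2} via the pointwise comparisons $\|{\cal H}_{b_{n-1},b_n}\|\le\|{\cal H}_{b_n}\|$ and $\|{\cal H}_{b_{n-1},b_n}\|\le\|{\cal H}_{\zeta^{-1}(x),\zeta(x)}\|$ for $x\in\Delta_n$.

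For necessity, ${\cal A}_0,{\cal A}_1\lesssim C_{\cal T}$ follow from two pointwise lower bounds on ${\cal T}f$: the reverse Oinarov inequalities $k(z,y)\gtrsim k(z,x)$ and $k(z,y)\gtrsim k(x,y)$ (valid for $y\le x\le z$) give
\[
({\cal T}f)(x)\gtrsim\left(\int_0^x w\right)^{1/q}\int_x^\infty k(z,x)f(z)\,dz,\quad ({\cal T}f)(x)\gtrsim\left(\int_0^x [k(x,y)]^q w(y)\,dy\right)^{1/q}\int_x^\infty f.
\]
For ${\cal A}_2\lesssim C_{\cal T}$ when $p\le r$, the identity $({\cal T}f)(t)=\|{\cal H}_t f\|_{L^q_w}$ combined with monotonicity yields $\|{\cal T}f\|_{L^r_u}\ge(\int_t^\infty u)^{1/r}({\cal T}f)(t)$, and taking $\sup_t$ gives the bound. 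For $r<p$, I would transplant the block construction of Theorem~\ref{theorem11} with the blocks ${\cal H}_{b_{n-2},b_{n+1}}$: pick $f_n$ supported in $[b_{n-2},b_{n+2}]$ with $\|f_n\|_{L^p_v}=1$ almost saturating $\|{\cal H}_{b_{n-2},b_{n+1}}\|$, form $g:=\sum_n(2^{-n})^{s/(pr)}\|{\cal H}_{b_{n-2},b_{n+1}}\|^{s/p}f_n$, use bounded overlap ($g_n$'s meeting a fixed $\Delta_j$ run over $O(1)$ indices) to get $\|g\|_{L^p_v}^p\lesssim\bar{{\cal A}}_2^s$, and use monotonicity of ${\cal T}g$ on the strip $(b_{n+1},b_{n+2}]$ combined with ${\cal T}g(b_{n+1})\ge\|{\cal H}_{b_{n-2},b_{n+1}}g\|_{L^q_w}$ to recover $\|{\cal T}g\|_{L^r_u}^r\gtrsim\theta^r\bar{{\cal A}}_2^s$, hence $C_{\cal T}\gtrsim\theta\bar{{\cal A}}_2\gtrsim\theta{\cal A}_2$.

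The $q=\infty$ case is handled by \eqref{summ} in place of \eqref{sum}. The step I expect to demand the most care is verifying $\|{\cal H}_{b_{n-1},b_n}\|\le\|{\cal H}_{\zeta^{-1}(x),\zeta(x)}\|$ for $x\in\Delta_n$, used in the ${\cal A}_2$-comparison; this will reduce to the monotonicity chain $\zeta^{-1}(x)\le b_{n-1}$, $\zeta(x)\ge b_n$, $\zeta^2(x)\ge b_{n+1}$, which is immediate from the definitions of $\zeta$ and $\{b_n\}$ but is the only place where the reversed geometry of $\zeta$ relative to $\sigma$ requires bookkeeping beyond literal translation of the proof of Theorem~\ref{theorem11}.
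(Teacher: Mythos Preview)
Your proposal is correct and follows precisely the route the paper intends: the authors state only that Theorems~\ref{theorem31} and \ref{theorem32} are proved ``similar to the previous section,'' and your reversal---replacing $\sigma$ by $\zeta$, the left-dyadic sequence $\{a_n\}$ by the right-dyadic $\{b_n\}$, and exploiting the monotone \emph{increase} of $\mathcal{T}f$ together with the reflected form of \eqref{sum}---is exactly that translation, including the identification $(\mathcal{T}f)(t)=\|\mathcal{H}_t f\|_{L^q_w}$ and the block construction for $r<p$. One cosmetic slip: since every $b_n$ is finite (because $\int_t^\infty u>0$ for all $t$) while the sequence may terminate on the \emph{left} when $\int_0^\infty u<\infty$, the summation range should read $n\ge N'$ (with $N'$ possibly $-\infty$) rather than $n\le N$; this does not affect any estimate.
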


\begin{thm}\label{theorem32} Let $1\le p<\infty$,  $0<r<\infty$, $0<q\le\infty,$ $\frac{1}{s}:=\left(\frac{1}{r}-\frac{1}{p}\right)_+$. For validity of the inequality \eqref{calSin} it is necessary and sufficient that the inequalities 
\begin{align}
 &\left(\int_0^\infty u(x)\left(\int_{\zeta^{-2}(x)}^x w\right)^{\frac{r}{q}}\left(\int_0^{\zeta^{-2}(x)} k(\zeta^{-2}(x),z) f(z)dz\right)^r\,dx\right)^\frac{1}{r}\le {\bf A}_0\|f\|_{L^p_v},\label{bfA0}\\
&\left(\int_0^\infty u(x)\left(\int_{\zeta^{-2}(x)}^x w(y)[k(y,\zeta^{-2}(x))]^qdy\right)^{\frac{r}{q}}\left(\int_0^{\zeta^{-2}(x)} f\right)^r\,dx\right)^\frac{1}{r}\le{\bf A}_1\|f\|_{L^p_v},\nonumber
\end{align}   
if $q<\infty$ or
\begin{align*}
&\left(\int_0^\infty u(x)[\underset{y\in(\zeta^{-2}(x),x)}{\mathrm{ess\,sup}}~w(y)]^r\left(\int_0^{\zeta^{-2}(x)} k(\zeta^{-2}(x),z) f(z)dz\right)^r\,dx\right)^\frac{1}{r}\le {\bf A}_0\|f\|_{L^p_v},\\
&\left(\int_0^\infty u(x)[\underset{y\in(\zeta^{-2}(x),x)}{\mathrm{ess\,sup}}[w(y)k(y,\zeta^{-2}(x))]]^r\left(\int_0^{\zeta^{-2}(x)} f\right)^r\,dx\right)^\frac{1}{r}\le {\bf A}_1\|f\|_{L^p_v}
\end{align*}   
for $q=\infty$ hold for all $f\in \Msp$ and the constant
 \begin{equation}\label{bfA2}
 {\bf A}_2:=\begin{cases}
    \displaystyle\sup_{t\in(0,\infty)}\left(\int_t^\infty u\right)^\frac{1}{r}\|{\cal H}_t^*\|_{L^p_v\to L^q_w}, & p\le r,\\
    \displaystyle \left(\int_0^\infty u(x)\left(\int_x^\infty u\right)^\frac{s}{p}\|{\cal H}^*_{\zeta^{-1}(x),\zeta(x)}\|^s_{L^p_v\to L^q_w}\,dx\right)^\frac{1}{s}, & r<p
   \end{cases}
 \end{equation}
is finite. Moreover, $C_{\cal S}\approx{\bf A}_0+{\bf A}_1+{\bf A}_2.$
\end{thm}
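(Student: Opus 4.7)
The plan is to mirror the proof of Theorem~2.3 with the dyadic discretisation built from $\int_t^\infty u$ rather than $\int_0^t u$, so that the roles of $\sigma,\sigma^{-1}$ get exchanged with $\zeta^{-1},\zeta$. Fix $n_0\in\Ze$ with $2^{n_0}<\int_0^\infty u$, pick $b_{n_0}$ with $\int_{b_{n_0}}^\infty u\approx 2^{-n_0}$, and iterate $b_{n+1}:=\zeta(b_n)$, $b_{n-1}:=\zeta^{-1}(b_n)$; then $\Delta_n:=(b_n,b_{n+1}]$ satisfies $\int_{\Delta_n}u\approx 2^{-n}$ and $\zeta^{-2}(x)\in\Delta_{n-2}$ for $x\in\Delta_n$. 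Applying \eqref{sum} in the reversed index direction to the cumulative outer integration in $\mathcal{S}$ collapses it to a single level:
\[
\int_0^\infty[\mathcal{S}f]^r u\approx\sum_{n\le N}2^{-n}\Bigl(\int_{\Delta_n}w(y)\Bigl(\int_0^y k(y,z)f(z)\,dz\Bigr)^q dy\Bigr)^{r/q}.
\]

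For sufficiency (taking $q<\infty$; the case $q=\infty$ uses \eqref{summ}) pair each $y\in\Delta_n$ with an $x\in\Delta_{n+1}$ using $\int_{\Delta_{n+1}}u\approx 2^{-n}$, and split the inner integral at $\zeta^{-2}(x)$ into a near piece $I_1^r$ (over $(\zeta^{-2}(x),y)$) and a far piece $I_2^r$ (over $(0,\zeta^{-2}(x))$). In $I_2$ the Oinarov condition yields $k(y,z)\approx k(y,\zeta^{-2}(x))+k(\zeta^{-2}(x),z)$ for $y\in\Delta_n$, $z<\zeta^{-2}(x)$, and enlarging $\int_{\Delta_n}w\le\int_{\zeta^{-2}(x)}^x w$ (and analogously for the kernel-weighted $w$-integral) produces exactly the summands bounded by $\mathbf{A}_0\|f\|_{L^p_v}$ and $\mathbf{A}_1\|f\|_{L^p_v}$. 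In $I_1$ the inner integral equals $(\mathcal{H}^*_{\zeta^{-1}(x),\zeta(x)}f)(y)$, so
\[
I_1^r\lesssim\sum_{n\le N}\Bigl(\int_{\Delta_{n+1}}u\Bigr)\bigl\|\mathcal{H}^*_{\zeta^{-1}(x_n),\zeta(x_n)}\bigr\|^r_{L^p_v\to L^q_w}\bigl\|f\chi_{(\zeta^{-2}(x_n),\zeta^{2}(x_n))}\bigr\|^r_{L^p_v},\quad x_n\in\Delta_{n+1}.
\]
Jensen (for $p\le r$) extracts a supremum and, combined with $\|\mathcal{H}^*_{c,d}\|\le\|\mathcal{H}^*_d\|$, is dominated by $\mathbf{A}_2$; H\"older with exponents $s/r$ and $p/r$ (for $r<p$) reproduces the integral defining $\mathbf{A}_2^s$ verbatim as in Theorem~2.3.

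For necessity, the bounds $C_{\mathcal{S}}\gtrsim\mathbf{A}_0,\mathbf{A}_1$ follow by restricting the outer $y$-integral of $\mathcal{S}f(x)$ to $(\zeta^{-2}(x),x)$ and bounding $k(y,z)\gtrsim k(\zeta^{-2}(x),z)$ or $k(y,z)\gtrsim k(y,\zeta^{-2}(x))$ via Oinarov. The $p\le r$ half of $\mathbf{A}_2$ is immediate from the monotonicity of $\mathcal{S}f$ in $x$: since $\mathcal{S}f(t)=\|\mathcal{H}^*_t f\|_{L^q_w}$ and $\mathcal{S}f(x)\ge\mathcal{S}f(t)$ for $x\ge t$, one gets $C_{\mathcal{S}}\ge(\int_t^\infty u)^{1/r}\|\mathcal{H}^*_t\|_{L^p_v\to L^q_w}$ for every $t$. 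For $r<p$, build $g=\sum_n(2^{-n})^{s/(pr)}\|\mathcal{H}^*_{\zeta^{-1}(b_n),\zeta(b_n)}\|^{s/p}f_n$ with each $f_n$ near-optimal for $\mathcal{H}^*_{\zeta^{-1}(b_n),\zeta(b_n)}$ and supported in a narrow window around $\Delta_n$; verify a finite-overlap bound on $\|g\|_{L^p_v}^p$ and read off the lower bound for $\mathcal{S}g$ on a $\Delta_m$ disjoint from the support of $f_n$, so that $\mathcal{S}g(x)$ sees the full norm $\|\mathcal{H}^*_{\zeta^{-1}(b_n),\zeta(b_n)}f_n\|_{L^q_w}$, exactly as in the last computation of Theorem~2.3.

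The main technical obstacle is the index bookkeeping forced by the reversal of direction: every support window and every application of \eqref{sum} shifts, and one must check that the narrow supports of the $f_n$ land simultaneously inside the natural domain of $\mathcal{H}^*_{\zeta^{-1}(b_n),\zeta(b_n)}$, outside the shell on which $\mathcal{S}g$ is tested from below, and with bounded overlap in the $L^p_v$-norm of $g$. Once this is in place, the computation duplicates Theorem~2.3 line by line, and the case $q=\infty$ follows by the routine replacement of \eqref{sum} by \eqref{summ} throughout.
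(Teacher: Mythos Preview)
Your proposal is correct and follows exactly the approach the paper intends: the paper gives no separate proof for Theorem~3.2 beyond the remark that Theorems~3.1 and~3.2 are proved ``similar to the previous section'', and your sketch carries out precisely the mirror of the proof of Theorem~2.3 with $\int_0^t u,\ \sigma,\ \sigma^{-1}$ replaced by $\int_t^\infty u,\ \zeta^{-1},\ \zeta$. The only imprecision to tidy up is in the near piece $I_1$: since $y\in\Delta_n$ need not lie to the right of $\zeta^{-1}(x)$ when $x\in\Delta_{n+1}$, the inner integral is not literally $(\mathcal{H}^*_{\zeta^{-1}(x),\zeta(x)}f)(y)$, and it is cleaner (as in Theorem~2.3) to split at the fixed point $b_{n-1}$, identify the near part with $(\mathcal{H}^*_{b_n,b_{n+1}}f)(y)$, and only afterwards pass to $\mathcal{H}^*_{\zeta^{-1}(x),\zeta(x)}$ by monotonicity of the operator norm.
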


\section{Operators ${\bf T}$ and ${\bf S}$\label{bTS}}

Let the functions $\sigma$ and $\sigma^{-1}$ be the same as in the Section 2. 
For $0<c<d\le\infty$ and $f\in\Msp$ we put
\begin{align*}({\bf H}_{c,d} f)(x)&:=\chi_{[c,d)}(x)\int_{\sigma^{-1}(c)}^x f(z)dz,~~~({\bf H}_c f)(x):=\chi_{[c,\infty)}(x)\int_0^x f(z)dz,\\
({\bf H}^*_{c,d} f)(x)&:=\chi_{[c,d)}(x)\int_x^{\sigma(d)}f(z)dz,~~~({\bf H}^*_c f)(x):=\chi_{[c,\infty)}(x)\int_x^\infty f(z)dz.
\end{align*}

\begin{theorem}\label{theorem41} Let $1\le p<\infty$,  $0<r<\infty$, $0<q\leq\infty$, $\frac{1}{s}:=\left(\frac{1}{r}-\frac{1}{p}\right)_+$. For validity of the inequality \eqref{bTin} it is necessary and sufficient that 
 \begin{equation}\label{B}
 B:=B_0+B_1+B_2<\infty,
\end{equation}
 where $B_0$ and $B_1$ are the least possible constants in the inequalities 
 \begin{equation}\label{B0}
 \left(\int_0^\infty u(x)\left(\int_x^\infty k(y,x)w(y)dy\right)^\frac{r}{q}\left(\int_0^x f\right)^r\,dx\right)^\frac{1}{r}\le B_0\|f\|_{L^p_v},
 \end{equation}
and
 \begin{equation}\label{B1}
  \left(\int_0^\infty u(x)[k(\sigma^2(x),x)]^\frac{r}{q}\left(\int_{\sigma^2(x)}^\infty w(y)\left(\int_0^y f\right)^q dy\right)^\frac{r}{q} dx\right)^\frac{1}{r}\le B_1\|f\|_{L^p_v},
 \end{equation}
if $q<\infty$ or
\begin{align}
&\left(\int_0^\infty u(x)[\underset{y\geq x}{\mathrm{ess\,sup}}~k(y,x)w(y)]^r\left(\int_0^x f\right)^r\,dx\right)^\frac{1}{r}\le B_0\|f\|_{L^p_v},\label{B0inf} \\
&\left(\int_0^\infty u(x)[k(\sigma^2(x),x)]^r\left(\underset{y\geq \sigma^2(x)}{\mathrm{ess\,sup}}~ w(y)\int_0^y f\right)^rdx\right)^\frac{1}{r}\le B_1\|f\|_{L^p_v},
\label{B1inf} 
\end{align}   
for $q=\infty$ and $B_2$ is defined by  
 \begin{equation}\label{B2}
B_2:=\begin{cases}
   \displaystyle \sup_{t>0}\left(\int_0^t u\right)^\frac{1}{r}\|{\bf H}_t\|_{L^p_v\to L^q_{w(\cdot) k(\cdot,t)}},& p\le r,\\
 \displaystyle   \left(\int_0^\infty u(x)\left(\int_0^x u\right)^\frac{s}{p}\|{\bf H}_{\sigma^{-1}(x),\sigma^2(x)}\|^s_{L^p_v\to L^q_{w(\cdot) k(\cdot,\sigma^{-1}(x))}}\,dx\right)^\frac{1}{s},& r<p.
      \end{cases}
\end{equation}
Moreover, $C_{\bf T}\approx B$.
\end{theorem}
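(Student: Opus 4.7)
The proof follows the same dyadic scheme as Theorem~\ref{theorem11}: I use the sequence $\{a_n\}_{n\le N}$ with $\int_0^{a_n}u\approx 2^n$ and intervals $\Delta_n:=[a_n,a_{n+1})$. The key new ingredient, specific to the operator $\mathbf{T}$, is the elementary inequality
$$\left(\int_0^y f\right)^q\approx\left(\int_0^x f\right)^q+\left(\int_x^y f\right)^q\qquad(y\ge x,\ q>0),$$
which splits $\mathbf{T}f(x)^q$ into two pieces:
$$\mathbf{T}f(x)^q\approx\left(\int_0^x f\right)^q\int_x^\infty k(y,x)w(y)\,dy+\int_x^\infty k(y,x)w(y)\left(\int_x^y f\right)^q dy=:I(x)+II(x).$$

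\textbf{Necessity.} The bound $B_0\lesssim C_{\mathbf{T}}$ follows by shrinking $\int_0^y f$ to $\int_0^x f$ in the defining outer integral of $\mathbf{T}f(x)$. The bound $B_1\lesssim C_{\mathbf{T}}$ follows by restricting the outer integral to $y\ge\sigma^2(x)$ and using Oinarov's condition $k(y,x)\gtrsim k(\sigma^2(x),x)$. For $B_2\lesssim C_{\mathbf{T}}$: in the case $p\le r$, the bound $k(y,x)\gtrsim k(y,t)$ for $x\le t\le y$ (again by \eqref{O}) gives $\mathbf{T}f(x)\gtrsim\|\mathbf{H}_t f\|_{L^q_{w(\cdot)k(\cdot,t)}}$ on $[0,t]$, and integrating $u$ over $[0,t]$ produces the supremum definition of $B_2$. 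The case $r<p$ uses the standard gluing of near-optimal test functions $f_n$ for $\|\mathbf{H}_{a_{n-1},a_{n+2}}\|_{L^p_v\to L^q_{w(\cdot)k(\cdot,a_{n-1})}}$, mimicking the $r<p$ step of Theorem~\ref{theorem11}.

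\textbf{Sufficiency.} The piece $I(x)$ immediately contributes $\le B_0^r\|f\|_{L^p_v}^r$ to $\|\mathbf{T}f\|_{L^r_u}^r$. For $II(x)$, I discretise. For $x\in\Delta_n$, split the outer integral at $a_{n+2}$. On the far part $\int_{a_{n+2}}^\infty$, Oinarov's condition gives $k(y,x)\approx k(y,a_{n+2})+k(a_{n+2},x)$, and the splitting $\int_x^y f=\int_x^{a_{n+2}}f+\int_{a_{n+2}}^y f$ produces four sub-terms, each absorbed either by $B_0$, by $B_1$ (using $k(a_{n+2},x)\lesssim k(\sigma^2(x),x)$ for $x\in\Delta_n$, a consequence of Oinarov applied to the triple $x\le a_{n+2}\le\sigma^2(x)$), or by a residual Hardy-type quantity discussed below. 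On the local part $\int_x^{a_{n+2}}$, bounding $k(y,x)\le D(k(y,a_{n-1})+k(a_{n-1},x))$ converts the piece into a constant times the local norm $\|\mathbf{H}_{a_n,a_{n+2}}f\|_{L^q_{w(\cdot)k(\cdot,a_{n-1})}}$; summing over $n$ and applying Jensen's inequality (for $p\le r$) or H\"older's inequality with exponents $s/r,\,p/r$ (for $r<p$) yields $\lesssim B_2^r\|f\|_{L^p_v}^r$, exactly as in Theorem~\ref{theorem11}.

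The main obstacle is the control of the residual sub-term $\int_{a_{n+2}}^\infty k(y,a_{n+2})w(y)(\int_{a_{n+2}}^y f)^q dy$ arising in the far part of $II(x)$: its bound requires invoking the characterisation of $T$ from Theorem~\ref{theorem11} applied to the shifted function $f\chi_{[a_{n+2},\infty)}$ and then checking that the corresponding $A_0,A_1,A_2$ are themselves dominated by $B_0,B_1,B_2$ uniformly in $n$. This is precisely the point at which ``the second part is partially based on the first'' (Section~\ref{Int}). The case $q=\infty$ is identical, with~\eqref{sum} replaced by its supremum analogue~\eqref{summ}.
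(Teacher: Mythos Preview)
Your necessity argument is essentially correct and matches the paper (modulo an index slip: in the $r<p$ case the relevant local operator is $\mathbf{H}_{a_{n-1},a_{n+3}}$, not $\mathbf{H}_{a_{n-1},a_{n+2}}$, because $B_2$ involves $\sigma^2(x)$). The sufficiency argument, however, has a genuine gap at the residual step. After your initial split $(\int_0^y f)^q\approx(\int_0^x f)^q+(\int_x^y f)^q$ and the further four-term decomposition of the far part of $II(x)$, the residual
\[
R_n:=\int_{a_{n+2}}^\infty k(y,a_{n+2})\,w(y)\Bigl(\int_{a_{n+2}}^y f\Bigr)^q\,dy
\]
is, after reindexing, the same object you started from: $\sum_n 2^n R_n^{r/q}=\tfrac14\sum_m 2^m\bigl(\int_{a_m}^\infty k(y,a_m)w(y)(\int_{a_m}^y f)^q\,dy\bigr)^{r/q}$, which is $\approx\sum_n\int_{\Delta_n}u\,II^{r/q}$ up to a constant that also absorbs powers of $D$ and of $2^{|q-1|}$, $2^{|r/q-1|}$ from the splitting. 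There is no reason this constant is $<1$, so the bootstrapping does not close. Your proposed fix --- applying Theorem~\ref{theorem11} to $f\chi_{[a_{n+2},\infty)}$ --- does not help: that theorem is about kernels inside the inner integral, and even reading it with trivial kernel and weight $w(\cdot)k(\cdot,a_{n+2})$, the weight now depends on $n$ and you would have to show the associated $A_i$'s are dominated by the $B_i$'s uniformly in $n$, which you do not address. (The sentence in the introduction about ``the second part being partially based on the first'' refers to the fact that the \emph{condition} $B_1<\infty$ in \eqref{B1} is itself an inequality of type \eqref{Tin} with $k\equiv 1$, so its explicit characterisation comes from Section~\ref{TS}; it is not a hint that Theorem~\ref{theorem11} is invoked inside the present proof.)

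The paper avoids this circularity by \emph{not} splitting $\int_0^y f$ at $x$. It sets $h(y):=w(y)(\int_0^y f)^q$, decomposes only the outer integral at $a_{n+2}$, and for the far piece $J_2=\sum_n 2^n\bigl(\int_{a_{n+2}}^\infty k(y,a_n)h(y)\,dy\bigr)^{r/q}$ writes $\int_{a_{n+2}}^\infty=\sum_{i\ge n}\int_{a_{i+2}}^{a_{i+3}}$ and applies Oinarov to $k(y,a_n)\approx k(y,a_{i+1})+k(a_{i+1},a_n)$. The first piece reassembles into $J_1$ via \eqref{sum}. The second piece is handled by the key technical ingredient you are missing, namely Lemma~3.1 of \cite{GS2}: there exists $\alpha\in(0,1)$, depending only on $D$, with
\[
k(a_{i+1},a_n)\lesssim\Bigl(\sum_{j=n}^{i}[k(a_{j+1},a_j)]^\alpha\Bigr)^{1/\alpha}\qquad(i\ge n),
\]
after which Minkowski's inequality in $\ell^{1/\alpha}$ and another application of \eqref{sum} give $\sum_n 2^n I_{2,n}^{r/q}\lesssim\sum_n 2^n[k(a_{n+1},a_n)]^{r/q}(\int_{a_{n+2}}^\infty h)^{r/q}\le B_1^r\|f\|_{L^p_v}^r$ directly, with no self-reference. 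This $\alpha$-telescoping for Oinarov kernels is the missing idea in your argument.
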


\begin{proof}    Let the sequence $\{a_n\}$ be the same as in the proof of Theorem~\ref{theorem11} and $q<\infty.$

{\it Sufficiency.} 
We write
\begin{align*}
J&:=\int_0^\infty [Tf]^ru=\sum_{n\le N}\int_{a_n}^{a_{n+1}}[Tf]^ru
\\
&\approx\sum_{n\le N} 2^n\left(\int_{a_n}^\infty k(y,a_n)w(y)\left(\int_0^y f\right)^q dy\right)^{\frac{r}{q}}\approx J_1+J_2,
\end{align*}
where
\begin{align*}
 J_1&:=\sum_{n\le N} 2^n\left(\int_{a_n}^{a_{n+2}} k(y,a_n)w(y)\left(\int_0^y f\right)^q dy\right)^{\frac{r}{q}},\\
 J_2&:=\sum_{n\le N} 2^n\left(\int_{a_{n+2}}^\infty k(y,a_n)w(y)\left(\int_0^y f\right)^q dy\right)^{\frac{r}{q}}.
\end{align*}

{\it Estimate of $J_1.$} We have
\begin{align*}
J_1&\approx
\sum_{n\le N} 2^n\left(\int_{a_n}^{a_{n+2}} k(y,a_n)w(y)\left(\int_{a_{n-1}}^y f\right)^q dy\right)^{\frac{r}{q}}
\\
&+\sum_{n\le N} 2^n\left(\int_{a_n}^{a_{n+2}} k(y,a_n)w(y)dy\right)^{\frac{r}{q}}\left(\int_0^{a_{n-1}} f\right)^r=J_{1,1}+J_{1,2}.
\end{align*}
For $J_{1,2}$ we write
\begin{align*}
J_{1,2}&\approx
\sum_{n\le N} \int_{a_{n-1}}^{a_n}u(x)dx\left(\int_{a_n}^{a_{n+2}} k(y,a_n)w(y)dy\right)^{\frac{r}{q}}\left(\int_0^{a_{n-1}} f\right)^r\\
&\lesssim 
\int_0^\infty u(x)\left(\int_x^\infty k(y,x)w(y)dy\right)^\frac{r}{q}\left(\int_0^x f\right)^r\,dx\le B_0^r\left(\int_0^\infty f^pv\right)^\frac{r}{p}.
\end{align*}
For $J_{1,1}$ we write
\begin{align*}
J_{1,1}&\approx
\sum_{n\le N} 2^n\left(\int_{a_n}^{a_{n+2}}  k(y,a_n)w(y)\left({\bf H}_{a_n,a_{n+2}}f(y)\right)^q dy\right)^{\frac{r}{q}}\\
&\lesssim 
\sum_{n\le N} 2^n\|{\bf H}_{a_n,a_{n+2}}\|^r_{L^p_v\to L^q_{w(\cdot) k(\cdot,a_n)}}
\left(\int_{a_{n-1}}^{a_{n+2}}f^pv\right)^{\frac{r}{p}}.
\end{align*}
If $p\leq r$, by Jensen's inequality we get 
$$J_{1,1}\lesssim  B_2^r\|f\|^r_{L^p_v}.$$
If $r<p$, by H\"older's inequality,  
\begin{align*}
J_{1,1}&\lesssim \left(\sum_{n\le N} (2^n)^\frac{s}{r}\|{\bf H}_{a_n,a_{n+2}}\|^s_{L^p_v\to L^q_{w(\cdot) k(\cdot,a_n)}}\right)^\frac{r}{s}\|f\|_{L^p_v}^r\\
&\lesssim \left(\sum_{n\le N} \int_{a_n}^{a_{n+1}}u\left(\int_0^{a_n} u\right)^\frac{s}{p}\|{\bf H}_{\sigma^{-1}(a_{n+1}),\sigma^2(a_n)}\|^s_{L^p_v\to L^q_{w(\cdot) k(\cdot,\sigma^{-1}(a_{n+1}))}}\right)^\frac{r}{s}\|f\|_{L^p_v}^r\\
&\lesssim B_2^r\|f\|^r_{L^p_v}.
\end{align*}
Thus,
\begin{equation}\label{ocenkaj1}
 J_1\lesssim (B_0+B_2)^r\|f\|_{L^p_v}^r.
\end{equation}

{\it Estimate of $J_2.$} Denote $h(y):=w(y)\left(\int_0^y f\right)^q$ and  using \eqref{O} we obtain
\begin{align*}
 \int_{a_{n+2}}^\infty &k(y,a_n)h(y)\,dy=\sum_{i\ge n}\int_{a_{i+2}}^{a_{i+3}} k(y,a_n)h(y)\,dy\\
 &\approx\sum_{i\ge n}\int_{a_{i+2}}^{a_{i+3}} k(y,a_{i+1})h(y)\,dy+\sum_{i\ge n}\int_{a_{i+2}}^{a_{i+3}} k(a_{i+1},a_n)h(y)\,dy\\
 &\lesssim\sum_{i\ge n}\int_{a_{i+1}}^{a_{i+3}} k(y,a_{i+1})h(y)\,dy+\sum_{i\ge n}\int_{a_{i+2}}^{a_{i+3}} k(a_{i+1},a_n)h(y)\,dy=:I_{1,n}+I_{2,n}.
\end{align*}
Similar to the proof of \eqref{ocenkaj1} we find
\begin{equation}\label{ocenkaodin}
\sum_{n\le N}2^n I_{1,n}^\frac{r}{q}\approx J_1
\lesssim (B_0+B_2)^r\|f\|_{L^p_v}^r.
 \end{equation}
By \cite{GS2}, Lemma 3.1 there is $\alpha\in(0,1)$ such that
\begin{equation}\label{LGS}
k(a_{i+1},a_n)\lesssim\left(\sum_{j=n}^{i}[k(a_{j+1},a_j)]^\alpha\right)^{\frac{1}{\alpha}},\,\,i\geq n.
\end{equation}
By Minkowskii's inequality 
\begin{align*}
I_{2,n}&\lesssim\sum_{i\ge n}\left(\sum_{j=n}^{i}[k(a_{j+1},a_j)]^\alpha\right)^{\frac{1}{\alpha}}
\int_{a_{i+2}}^{a_{i+3}}h(y)dy\\
&\leq\left(\sum_{j\ge n}[k(a_{j+1},a_j)]^\alpha\left(\int_{a_{j+2}}^\infty h\right)^\alpha\right)^{\frac{1}{\alpha}}.
\end{align*}
Hence,
\begin{align*}
 \sum_{n\le N}2^n I_{2,n}^\frac{r}{q}&\leq \sum_{n\le N}2^n\left(\sum_{j\ge n}[k(a_{j+1},a_j)]^\alpha\left(\int_{a_{j+2}}^\infty h\right)^\alpha\right)^{\frac{r}{q\alpha}}
\\
 &\approx \sum_{n\le N}2^n k(a_{n+1},a_n)^\frac{r}{q}\left(\int_{a_{n+2}}^\infty h\right)^\frac{r}{q}\\
 &\approx \sum_{n\le N} \left[\int_{a_{n-1}}^{a_n} u\right] k(\sigma^2(a_{n-1}),a_n)^\frac{r}{q}\left(\int_{\sigma^2(a_n)}^\infty h\right)^\frac{r}{q}\\
 &\lesssim \int_0^\infty u(x)k(\sigma^2(x),x)^\frac{r}{q}\left(\int_{\sigma^2(x)}^\infty w(y)\left(\int_0^y f\right)^q dy\right)^\frac{r}{q} dx\le B_1^r\|f\|_{L^p_v}^r.
 \end{align*}
In case of $q=\infty$ we write
\begin{align*}
 &\esssup_{y\in [a_{n+2},\infty)} k(y,a_n)h(y)=\sup_{i\ge n}\esssup_{y\in {[a_{i+2},a_{i+3})}} k(y,a_n)h(y)\\
&\lesssim \sup_{i\ge n}\esssup_{y\in {[a_{i+1},a_{i+3})}} k(y,a_{i+1})h(y)
+\sup_{i\ge n} k(a_{i+1},a_n)\esssup_{y\in {[a_{i+2},a_{i+3})}}h(y)
=:I_{1,n}+I_{2,n}.
\end{align*}
The estimate \eqref{ocenkaodin} follows by the same way.
Also we have
\begin{align*}
I_{2,n}&\lesssim\left(\sup_{i\ge n}\sum_{j=n}^{i}[k(a_{j+1},a_j)]^\alpha
\left(\esssup_{y\in [a_{i+2},a_{i+3})}h(y)\right)^\alpha\right)^{\frac{1}{\alpha}}\\
&\leq\left(\sum_{j\ge n}[k(a_{j+1},a_j)]^\alpha\sup_{i\ge n}\left(\esssup_{y\in [a_{i+2},a_{i+3})} h(y)\right)^\alpha\chi_{[n,i]}(j)\right)^{\frac{1}{\alpha}}\\
&=\left(\sum_{j\ge n}[k(a_{j+1},a_j)]^\alpha\left(\esssup_{y\in [a_{j+2},\infty)} h(y)\right)^\alpha\right)^{\frac{1}{\alpha}}
\end{align*}
and the inequality  $\sum_{n\le N}2^n I_{2,n}^\frac{r}{q}\lesssim B_1^r\|f\|_{L^p_v}^r$ follows for this case too.
Thus, 
$$J_2\lesssim (B_0+B_1+B_2)^r\|f\|_{L^p_v}^r$$
and 
the upper bound
$
C_{\bf T}\lesssim B_0+B_1+B_2
$
is proved.

{\it Necessity.} Suppose the inequality \eqref{bTin} hold, that is
\begin{equation}\label{Tin1}
\left(\int_0^\infty\left(\int_x^\infty k(y,x)w(y)\left(\int_0^y f\right)^qdy\right)^{\frac{r}{q}} u\right)^{\frac{1}{r}} \leq C_{\bf T}\left(\int_0^\infty f^pv\right)^{\frac{1}{p}}
\end{equation}
for all $f\in\mathfrak{M}^+.$ Narrowing the integration $(0,y)\to(0,x)$ on the left-hand side, we see, that $C_{\bf T}\geq B_0.$ Analogously, if $(x,\infty)\to (\sigma^2(x),\infty)$,  $k(y,x)\gtrsim  k(\sigma^2(x),x)$, then $C_{\bf T}\geq B_1$. If $(0,\infty)\to(0,t)$, $(x,\infty)\to(t,\infty)$, $k(y,x)\gtrsim  k(y,t)$, then 
\begin{equation}\label{lowboundpler}
C_{\bf T}\geq \left(\int_0^t u\right)^\frac{1}{r}\|{\bf H}_t\|_{L^p_v\to L^q_{w(\cdot) k(\cdot,t)}}
 \end{equation}
 for all $t>0$. Consequently,  $C_{\bf T}\gtrsim B_2$ in case of $p\le r$.

In the case $r<p$ we write 
\begin{align*}
B_2^s&=\int_0^\infty u(x)\left(\int_0^x u\right)^\frac{s}{p}\|{\bf H}_{\sigma^{-1}(x),\sigma^2(x)}\|^s_{L^p_v\to L^q_{w(\cdot) k(\cdot,\sigma^{-1}(x))}}\,dx\\
&=
\sum_{n\leq N}\int_{a_n}^{a_{n+1}}u(x)\left(\int_0^x u\right)^\frac{s}{p}\|{\bf H}_{\sigma^{-1}(x),\sigma^2(x)}\|^s_{L^p_v\to L^q_{w(\cdot) k(\cdot,\sigma^{-1}(x))}}\,dx\\
&\le\sum_{n\leq N}\left(\int_{a_n}^{a_{n+1}}u\right)\left(\int_0^{a_{n+1}}u\right)^\frac{s}{p}\|{\bf H}_{\sigma^{-1}(a_n),\sigma^2(a_{n+1})}\|_{L^p_v\to L^q_{w(\cdot) k(\cdot,\sigma^{-1}(a_n))}}^s\\
&\approx \sum_{n\leq N} (2^n)^\frac{s}{r}\|{\bf H}_{a_{n-1},a_{n+3}}\|_{L^p_v\to L^q_{w(\cdot) k(\cdot,a_{n-1})}}^s=:\bar{B}_2^s.
\end{align*}
Let $\theta\in(0,1)$ be arbitrary. Then for all $n\leq N$ there is $f_n\in\Msp$ such that $\textrm{supp} f_n\subset [a_{n-2},a_{n+3}]$, $\|f_n\|_{L^p_v}=1$ and $$
\|{\bf H}_{a_{n-1},a_{n+3}}f_n\|_{L^q_{w(\cdot) k(\cdot,a_{n-1})}}\ge \theta\|{\bf H}_{a_{n-1},a_{n+3}}\|_{L^p_v\to L^q_{w(\cdot) k(\cdot,a_{n-1})}}.
$$
Put
$$
g_n:=(2^n)^\frac{s}{pr}\|{\bf H}_{a_{n-1},a_{n+3}}\|_{L^p_v\to L^q_{w(\cdot) k(\cdot,a_{n-1})}}^\frac{s}{p}f_n,~~~g:=\sum_{n\leq N} g_n.
$$
We have
\begin{align*}
\|g\|_{L^p_v}^p&=\sum_{j\leq N}\int_{a_j}^{a_{j+1}}\left(\sum_{n\leq N} g_n(x)\right)^pv(x)\,dx\\
&=\sum_{j\leq N}\int_{a_j}^{a_{j+1}}\left(\sum_{n=j-2}^{j+2} g_n(x)\right)^pv(x)\,dx\\
&\lesssim \sum_{j\leq N}\int_{a_{j-2}}^{a_{j+3}}g_j(x)^pv(x)\,dx\\
&=\sum_{j\leq N}(2^j)^\frac{s}{r}\|{\bf H}_{a_{j-1},a_{j+3}}\|_{L^p_v\to L^q_{w(\cdot) k(\cdot,a_{j-1})}}^s=\bar{B}_2^s.
\end{align*}
Finally, applying \eqref{Tin}
\begin{align*}
C_{\bf T}^r\mathcal{A}^{\frac{sr}{p}}&\gtrsim C_{\bf T}^r\|g\|_{L^p_v}^r\geq\int_0^\infty[Tg]^ru
\ge\sum_{n\leq N} \int_{a_{n-2}}^{a_{n-1}} [{\bf T}g]^ru\\
&\ge \sum_{n\leq N} \left(\int_{a_{n-2}}^{a_{n-1}} u\right)\|{\bf H}_{a_{n-1},a_{n+3}}g\|_{L^q_{w(\cdot) k(\cdot,a_{n-1})}}^r
\gtrsim\sum_{n\leq N} 2^n\|{\bf H}_{a_{n-1},a_{n+3}}g_n\|_{L^q_{w(\cdot) k(\cdot,a_{n-1})}}^r\\
&=\sum_{n\leq N} (2^n)^\frac{s}{r}\|{\bf H}_{a_{n-1},a_{n+3}}\|_{L^p_v\to L^q_{w(\cdot) k(\cdot,a_{n-1})}}^\frac{sr}{p}\|{\bf H}_{a_{n-1},a_{n+3}}f_n\|_{L^q_{w(\cdot) k(\cdot,a_{n-1})}}^r\ge \theta^r\bar{B}_2^s.
\end{align*}
Thus, $C_{\bf T}\gtrsim \theta\bar{B}_2$. Hence, $C_{\bf T}\gtrsim \theta B_2$ and the required lower bound $C_{\bf T}\gtrsim B_0+B_1+B_2$ follows.
\end{proof}


\begin{rem} Similar to \eqref{Tpinf} and \eqref{Trinf} we have 
\beqn\label{bTpinf}
C_{\bf T}=\left\|{\bf T}\left(\frac{1}{v}\right)\right\|_{L^r_u},\,\,p=\infty,
\eeqn
\beqn\label{bTrinf}
C_{\bf T}\approx\sup_{t\geq 0}U(t)\left\|{\bf H}_t\right\|_{L^p_v\to L^q_{w(\cdot)k(\cdot,t)}}, r=\infty.
\eeqn
\end{rem}

\begin{theorem}\label{integkrits} Let $1\le p<\infty$,  $0<r<\infty$, $0<q\leq\infty$, $\frac{1}{s}:=\left(\frac{1}{r}-\frac{1}{p}\right)_+$. For validity of the inequality
 \eqref{bSin} it is necessary and sufficient that 
 \begin{equation}\label{mB}
 {\mathbb B}:={\mathbb B}_0+{\mathbb B}_1+{\mathbb B}_2<\infty,
\end{equation}
 where ${\mathbb B}_0$ and ${\mathbb B}_1$ are the least possible constants in the inequalities 
 \begin{equation}\label{mB0}
 \left(\int_0^\infty u(x)\left(\int_x^{\sigma^3(x)} k(y,x)w(y)dy\right)^\frac{r}{q}\left(\int_{\sigma^3(x)}^\infty f\right)^r\,dx\right)^\frac{1}{r}\le {\mathbb B}_0\|f\|_{L^p_v},
 \end{equation}
and 
 \begin{equation}\label{mB1}
  \left(\int_0^\infty u(x)k(\sigma^2(x),x)^\frac{r}{q}\left(\int_{\sigma^2(x)}^\infty w(y)\left(\int_y^\infty f\right)^q dy\right)^\frac{r}{q} dx\right)^\frac{1}{r}\le {\mathbb B}_1\|f\|_{L^p_v},
 \end{equation}
when $q<\infty$ and 
\begin{equation}\label{mB0inf}
 \left(\int_0^\infty u(x)[\underset{x\leq y\leq \sigma^3(x)}{\mathrm{ess\,sup}}~k(y,x)w(y)]^r \left(\int_{\sigma^3(x)}^\infty f\right)^r\,dx\right)^\frac{1}{r}\le {\mathbb B}_0\|f\|_{L^p_v},
 \end{equation}
and 
 \begin{equation}\label{mB1inf}
  \left(\int_0^\infty u(x)[k(\sigma^2(x),x)]^r\left(\underset{y\geq \sigma^2(x)}{\mathrm{ess\,sup}}\,w(y) \int_y^\infty f\right)^rdx\right)^\frac{1}{r}\le {\mathbb B}_1\|f\|_{L^p_v},
 \end{equation}
if $q=\infty.$ The constant ${\mathbb B}_2$ is given by
 \begin{equation}\label{mB2}
 {\mathbb B}_2:=\begin{cases}
   \displaystyle \sup_{t>0}\left(\int_0^t u\right)^\frac{1}{r}\|{\bf H}^*_t\|_{L^p_v\to L^q_{w(\cdot) k(\cdot,t)}},& p\le r,\\
 \displaystyle   \left(\int_0^\infty u(x)\left(\int_0^x u\right)^\frac{s}{p}\|{\bf H}^*_{\sigma^{-1}(x),\sigma^2(x)}\|^s_{L^p_v\to L^q_{w(\cdot) k(\cdot,\sigma^{-1}(x))}}\,dx\right)^\frac{1}{s},& r<p.
      \end{cases}
\end{equation}
 Moreover, $C_{\bf S}\approx {\mathbb B}$.
\end{theorem}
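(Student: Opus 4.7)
\medskip

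The plan is to mirror the proof of Theorem~\ref{theorem41} almost line by line, with $\int_0^y f$ replaced by the forward-directed inner integral $\int_y^\infty f$, which is the only structural difference between $\mathbf{T}$ and $\mathbf{S}$. Using the same sequence $\{a_n\}$ and the increasing function $\sigma$ from Section~\ref{TS}, I would open with the decomposition
\begin{equation*}
\int_0^\infty [\mathbf{S}f]^r u=\sum_{n\le N}\int_{a_n}^{a_{n+1}}[\mathbf{S}f]^r u\approx \sum_{n\le N}2^n\left(\int_{a_n}^\infty k(y,a_n)w(y)\left(\int_y^\infty f\right)^q dy\right)^{r/q}
\end{equation*}
(using \eqref{O} to move the lower endpoint to $a_n$), then split the outer integral at $a_{n+2}$ into $J_1+J_2$, where $J_1$ integrates over $y\in[a_n,a_{n+2})$ and $J_2$ over $y\in[a_{n+2},\infty)$.

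\smallskip

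For $J_1$, the key move is to split the inner tail $\int_y^\infty f=\int_y^{a_{n+3}}f+\int_{a_{n+3}}^\infty f$. The first piece is precisely $(\mathbf{H}^*_{a_n,a_{n+2}}f)(y)$, so the resulting sum is of the form $\sum 2^n\|\mathbf{H}^*_{a_n,a_{n+2}}\|_{L^p_v\to L^q_{w(\cdot)k(\cdot,a_n)}}^r\|f\|_{L^p_v\text{, local}}^r$, which by Jensen's inequality (if $p\le r$) or Hölder's inequality with exponents $s/r,p/r$ (if $r<p$) is bounded by $\mathbb{B}_2^r\|f\|_{L^p_v}^r$, exactly as in the estimate of $J_{1,1}$ in Theorem~\ref{theorem41}. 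The second piece equals $\int_{a_{n+3}}^\infty f\approx \int_{\sigma^3(x)}^\infty f$ for $x\in[a_{n-1},a_n]$, which combined with $\int_{a_{n-1}}^{a_n}u\cdot(\int_{a_n}^{a_{n+2}}k(y,a_n)w(y)dy)^{r/q}$ yields $\mathbb{B}_0^r\|f\|_{L^p_v}^r$.

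\smallskip

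For $J_2$, apply \eqref{O} on each annulus $[a_{i+2},a_{i+3})$, $i\ge n$, to bound $k(y,a_n)\lesssim k(y,a_{i+1})+k(a_{i+1},a_n)$. The contribution from $k(y,a_{i+1})$ telescopes into a $J_1$-type sum and is absorbed into $\mathbb{B}_0+\mathbb{B}_2$. The contribution from $k(a_{i+1},a_n)$ is treated exactly as in Theorem~\ref{theorem41}: applying the Gogatishvili–Stepanov lemma \eqref{LGS} together with Minkowski's inequality (for $q<\infty$) or its $\ell^\infty$ analogue (for $q=\infty$) yields the bound
\begin{equation*}
\sum_{n\le N}2^n\,k(a_{n+1},a_n)^{r/q}\left(\int_{a_{n+2}}^\infty w(y)\left(\int_y^\infty f\right)^q dy\right)^{r/q}\lesssim \mathbb{B}_1^r\|f\|_{L^p_v}^r,
\end{equation*}
the only change being the inner integral direction, which does not affect the estimate since the passage uses only \eqref{LGS} and the concavity/monotonicity structure.

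\smallskip

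Necessity is obtained by the same narrowing arguments used for Theorem~\ref{theorem41}. Restricting $(x,\infty)\to(\sigma^3(x),\infty)$ in \eqref{bSin} and using monotonicity of $\int_y^\infty f$ gives $C_{\mathbf{S}}\gtrsim \mathbb{B}_0$; restricting $(x,\infty)\to(\sigma^2(x),\infty)$ together with $k(y,x)\gtrsim k(\sigma^2(x),x)$ gives $C_{\mathbf{S}}\gtrsim \mathbb{B}_1$; restricting $(0,\infty)\to(0,t)$ and $(x,\infty)\to(t,\infty)$ with $k(y,x)\gtrsim k(y,t)$ gives the pointwise lower bound needed for $\mathbb{B}_2$ when $p\le r$. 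For $r<p$, I would discretize $\mathbb{B}_2^s$ into $\bar{\mathbb{B}}_2^s:=\sum_n(2^n)^{s/r}\|\mathbf{H}^*_{a_{n-1},a_{n+3}}\|^s_{L^p_v\to L^q_{w(\cdot)k(\cdot,a_{n-1})}}$, choose near-extremal $f_n$ supported in $[a_{n-2},a_{n+3}]$ with $\|f_n\|_{L^p_v}=1$, and assemble $g=\sum(2^n)^{s/(pr)}\|\mathbf{H}^*_{a_{n-1},a_{n+3}}\|^{s/p}f_n$ exactly as in Theorem~\ref{theorem41}; the bounded-overlap of the supports gives $\|g\|_{L^p_v}^p\lesssim\bar{\mathbb{B}}_2^s$, and testing \eqref{bSin} against $g$ on the intervals $[a_{n-2},a_{n-1}]$ recovers $C_{\mathbf{S}}\gtrsim\theta\bar{\mathbb{B}}_2$. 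The main obstacle I anticipate is the bookkeeping for $J_2$: the forward direction of the inner integral forces the ``local'' splitting at $a_{n+3}$ rather than $a_{n-1}$, so one has to check carefully that the final discretized sums do line up with the integrals defining $\mathbb{B}_0$ and $\mathbb{B}_1$ (in particular the $\sigma^3$ in $\mathbb{B}_0$ and the $\sigma^2$ in $\mathbb{B}_1$), and that the $q=\infty$ version of the Minkowski step goes through unchanged.
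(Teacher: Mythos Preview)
Your proposal is correct and mirrors the paper's own proof essentially line by line: the paper also decomposes at $a_{n+2}$, splits the inner tail at $a_{n+3}$ to separate the $\mathbb{B}_0$ and $\mathbb{B}_2$ contributions in $J_1$, and handles $J_2$ by setting $h(y)=w(y)\bigl(\int_y^\infty f\bigr)^q$ and invoking \eqref{LGS} exactly as in Theorem~\ref{theorem41}. Two small bookkeeping corrections: for the necessity of $\mathbb{B}_0$ the narrowing should be $(x,\infty)\to(x,\sigma^3(x))$ for the outer $y$-integral (after which $\int_y^\infty f\ge\int_{\sigma^3(x)}^\infty f$), not $(\sigma^3(x),\infty)$; and in the $r<p$ step the near-extremal $f_n$ for $\mathbf{H}^*_{a_{n-1},a_{n+3}}$ should be supported in $[a_{n-1},a_{n+4}]$, since $\mathbf{H}^*_{c,d}$ integrates up to $\sigma(d)=a_{n+4}$.
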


\begin{proof}  Let the sequence $\{a_n\}$ be the same as in the proof of Theorem~\ref{theorem41} and $q<\infty.$

{\it Sufficiency.} 
We write
\begin{align*}
J&:=\int_0^\infty [{\bf S}f]^ru=\sum_{n\le N}\int_{a_n}^{a_{n+1}}[{\bf S}f]^ru
\\
&\approx\sum_{n\le N} 2^n\left(\int_{a_n}^\infty k(y,a_n)w(y)\left(\int_y^\infty f\right)^q dy\right)^{\frac{r}{q}}\approx J_1+J_2,
\end{align*}
where
\begin{align*}
 J_1&:=\sum_{n\le N} 2^n\left(\int_{a_n}^{a_{n+2}} k(y,a_n)w(y)\left(\int_y^\infty f\right)^q dy\right)^{\frac{r}{q}},\\
 J_2&:=\sum_{n\le N} 2^n\left(\int_{a_{n+2}}^\infty k(y,a_n)w(y)\left(\int_y^\infty f\right)^q dy\right)^{\frac{r}{q}}.
\end{align*}

{\it Estimate of $J_1.$} We have
\begin{align*}
J_1&\approx
\sum_{n\le N} 2^n\left(\int_{a_n}^{a_{n+2}} k(y,a_n)w(y)\left(\int_y^{a_{n+3}} f\right)^q dy\right)^{\frac{r}{q}}
\\
&+\sum_{n\le N} 2^n\left(\int_{a_n}^{a_{n+2}} k(y,a_n)w(y)dy\right)^{\frac{r}{q}}\left(\int_{a_{n+3}}^\infty f\right)^r=J_{1,1}+J_{1,2}.
\end{align*}
For $J_{1,2}$ we write
\begin{align*}
J_{1,2}&\approx
\sum_{n\le N} \int_{a_{n-1}}^{a_n}u(x)dx\left(\int_{a_n}^{a_{n+2}} k(y,a_n)w(y)dy\right)^{\frac{r}{q}}\left(\int_{a_{n+3}}^\infty f\right)^r\\
&\lesssim 
\int_0^\infty u(x)\left(\int_x^{\sigma^3(x)} k(y,x)w(y)dy\right)^\frac{r}{q}\left(\int_{\sigma^3(x)}^\infty f\right)^r\,dx\le {\mathbb B}_0^r\left(\int_0^\infty f^pv\right)^\frac{r}{p}.
\end{align*}
For $J_{1,1}$ we write
\begin{align*}
J_{1,1}&\approx
\sum_{n\le N} 2^n\left(\int_{a_n}^{a_{n+2}}  k(y,a_n)w(y)\left({\bf H}^*_{a_n,a_{n+2}}f(y)\right)^q dy\right)^{\frac{r}{q}}\\
&\lesssim 
\sum_{n\le N} 2^n\|{\bf H}^*_{a_n,a_{n+2}}\|^r_{L^p_v\to L^q_{w(\cdot) k(\cdot,a_n)}}
\left(\int_{a_n}^{a_{n+3}}f^pv\right)^{\frac{r}{p}}.
\end{align*}
If $p\leq r$ then, by Jensen's inequality, we get 
$$J_{1,1}\lesssim {\mathbb B}_2^r\|f\|^r_{L^p_v}.$$
If $r<p$ then, by H\"older's inequality,  
\begin{align*}
J_{1,1}&\lesssim \left(\sum_{n\le N} (2^n)^\frac{s}{r}\|{\bf H}^*_{a_n,a_{n+2}}\|^s_{L^p_v\to L^q_{w(\cdot) k(\cdot,a_n)}}\right)^\frac{r}{s}\|f\|_{L^p_v}^r\\
&\lesssim \left(\sum_{n\le N} \int_{a_n}^{a_{n+1}}u\left(\int_0^{a_n} u\right)^\frac{s}{p}\|{\bf H}^*_{\sigma^{-1}(a_{n+1}),\sigma^2(a_n)}\|^s_{L^p_v\to L^q_{w(\cdot) k(\cdot,\sigma^{-1}(a_{n+1}))}}\right)^\frac{r}{s}\|f\|_{L^p_v}^r\\
&\lesssim {\mathbb B}_2^r\|f\|^r_{L^p_v}.
\end{align*}
Thus 
\begin{equation}\label{ocenkaj1ops}
 J_1\lesssim ({\mathbb B}_0+{\mathbb B}_2)^r\|f\|_{L^p_v}^r.
\end{equation}

{\it Estimate of $J_2.$} Denote $h(y):=w(y)\left(\int_y^\infty f\right)^q$ and  arguing similar to the proof of Theorem~\ref{theorem41} we obtain
$$J_2\lesssim ({\mathbb B}_0+{\mathbb B}_1+{\mathbb B}_2)^r\|f\|_{L^p_v}^r.$$

{\it Necessity.} Suppose that the inequality \eqref{Sin} holds, that is
\begin{equation}\label{Sin1}
\left(\int_0^\infty\left(\int_x^\infty k(y,x)w(y)\left(\int_y^\infty  f\right)^qdy\right)^{\frac{r}{q}} u\right)^{\frac{1}{r}} \leq C_S\left(\int_0^\infty f^pv\right)^{\frac{1}{p}}
\end{equation}
for all $f\in\mathfrak{M}^+.$ Narrowing the integration $(x,\infty)\to(x,\sigma^3(x))$ and $(y,\infty)\to (\sigma^3(x),\infty)$ on the left-hand side, we see, that $C_{\bf S}\geq {\mathbb B}_0.$ Analogously, if $(x,\infty)\to (\sigma^2(x),\infty)$,  $k(y,x)\gtrsim  k(\sigma^2(x),x)$, then $C_{\bf S}\geq {\mathbb B}_1$. 
The proof of $C_{\bf S}\gtrsim {\mathbb B}_2$  is similar to the proof of $C_{\bf T}\gtrsim B_2$.
\end{proof}

\begin{rem} Similar to \eqref{Spinf} and \eqref{Srinf} the equalities 
\beqn\label{bSpinf}
C_{\bf S}=\left\|{\bf S}\left(\frac{1}{v}\right)\right\|_{L^r_u},\,\,p=\infty
\eeqn
and 
\beqn\label{bSrinf}
C_{\bf S}\approx\sup_{t\geq 0}U(t)\left\|{\bf H}^\ast_t\right\|_{L^p_v\to L^q_{w(\cdot)k(\cdot,t)}},\,\, r=\infty
\eeqn
hold true.
\end{rem} 

\section{Operators ${\mathfrak T}$ and ${\mathfrak S}$\label{mfTS}}

Let the functions $\zeta,\zeta^{-1}:[0,\infty)\to [0,\infty)$  be
the same as in the Section \ref{calTS}. 
For $0\le c<d<\infty$ and $f\in\mathfrak{M}^+$ we define operators
\begin{align*}
({\mathfrak H}_{c,d}f)(x):=\chi_{(c,d]}(x)\int_x^{\zeta(d)}f(z)dz,\\
({\mathfrak H}_d f)(x):=\chi_{(0,d]}(x)\int_x^\infty f(z)dz,\\
({\mathfrak H}^*_{c,d} f)(x):=\chi_{(c,d]}(x)\int_{\zeta^{-1}(c)}^x f(z)dz,\\
 ({\mathfrak H}^*_d f)(x):=\chi_{(0,d]}(x)\int_0^x f(z)dz.
\end{align*}
The following theorems are true.

\begin{thm} Let $1\le p<\infty$,  $0<r<\infty$, $0<q\le\infty,$ $\frac{1}{s}:=\left(\frac{1}{r}-\frac{1}{p}\right)_+$.   For validity of the inequality \eqref{fTin} it is necessary and sufficient that the inequalities 
\begin{align*}
 &\left(\int_0^\infty u(x)\left(\int_0^x k(x,y)w(y)dy\right)^{\frac{r}{q}}\left(\int_x^\infty f\right)^r\,dx\right)^\frac{1}{r}\le {\cal B}_0\|f\|_{L^p_v},
\\
&\left(\int_0^\infty u(x)[k(x,\zeta^{-2}(x))]^{\frac{r}{q}}\left(\int_0^{\zeta^{-2}(x)} w(y)\left(\int_y^\infty f\right)^qdy\right)^{\frac{r}{q}}\,dx\right)^\frac{1}{r}\le {\cal B}_1\|f\|_{L^p_v},
\end{align*}
if $q<\infty$ or
\begin{align*}
&\left(\int_0^\infty u(x)[\underset{y\in(0,x)}{\mathrm{ess\,sup}}~k(x,y)w(y)]^r\left(\int_x^\infty f\right)^r\,dx\right)^\frac{1}{r}\le {\cal B}_0\|f\|_{L^p_v},\\
&\left(\int_0^\infty u(x)[k(x,\zeta^{-2}(x))]^r\left(\underset{y\in(0,\zeta^{-2}(x))}{\mathrm{ess\,sup}}w(y)\int_y^\infty f\right)^r\,dx\right)^\frac{1}{r}\le {\cal B}_1\|f\|_{L^p_v}
\end{align*}   
for $q=\infty$ hold for all $f\in \Msp$ and the constant 
$${\cal B}_2:=\begin{cases}
    \displaystyle\sup_{t\in(0,\infty)}\left(\int_t^\infty  u\right)^\frac{1}{r}\|{\mathfrak H}_t\|_{L^p_v\to L^q_{w(\cdot)k(t,\cdot)}}, & p\le r,\\
    \displaystyle \left(\int_0^\infty u(x)\left(\int_x^\infty u\right)^\frac{s}{p}\|{\mathfrak H}_{\zeta^{-1}(x),\zeta^2(x)}\|^s_{L^p_v\to L^q_{w(\cdot)k(\zeta^2(x),\cdot)}}\,dx\right)^\frac{1}{s}, & r<p,
\end{cases}
$$
is finite. Moreover, $C_{\mathfrak T}\approx{\cal B}_0+{\cal B}_1+{\cal B}_2.$
\end{thm}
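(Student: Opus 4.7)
The plan is to mirror the argument of Theorems~\ref{theorem41} and \ref{integkrits}, with the $\sigma$-driven sequence $\{a_n\}$ (adapted to $\int_0^t u$) replaced by a $\zeta$-driven sequence $\{b_n\}$ (adapted to $\int_t^\infty u$), so that consecutive values of $\int_{b_n}^\infty u$ differ by a factor of $2$ and each interval $[b_n,b_{n+1})$ carries comparable $u$-mass $\lambda_n:=\int_{b_n}^{b_{n+1}}u$. Since $x\mapsto({\mathfrak T}f)(x)$ is non-decreasing, a straight monotonicity bound together with the dyadic rearrangement \eqref{sum} (or \eqref{summ} for $q=\infty$), suitably re-indexed for head sums, gives
$$\int_0^\infty[{\mathfrak T}f]^r u\ \approx\ \sum_n\lambda_n\left(\int_0^{b_n} k(b_n,y)w(y)\Bigl(\int_y^\infty f\Bigr)^q dy\right)^{r/q}.$$

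For sufficiency I would split the inner integral at $b_{n-2}$ into a local piece $J_1$ ($y\in(b_{n-2},b_n]$) and a far piece $J_2$ ($y\in(0,b_{n-2}]$). For $J_1$, the decomposition $(\int_y^\infty f)^q\lesssim(\int_y^{b_{n+1}}f)^q+(\int_{b_{n+1}}^\infty f)^q$ produces two subpieces: recognising $\int_y^{b_{n+1}}f=({\mathfrak H}_{b_{n-2},b_n}f)(y)$, the first is absorbed into ${\cal B}_2^r\|f\|^r_{L^p_v}$ by Jensen's inequality (for $p\leq r$) or H\"older's inequality with exponents $s/r,p/r$ (for $r<p$), exactly as in Theorem~\ref{theorem41}; the second, after enlarging $k(b_n,y)\lesssim k(x,y)$ via \eqref{O} for $x\in[b_n,b_{n+1}]$, fits into the ${\cal B}_0$-inequality. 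For $J_2$, Oinarov's splitting $k(b_n,y)\lesssim k(b_n,b_{n-2})+k(b_{n-2},y)$ yields a ``local at one lower scale'' term $k(b_{n-2},y)$, reabsorbed into $({\cal B}_0+{\cal B}_2)^r\|f\|^r_{L^p_v}$ by iterating, together with a genuinely far term $k(b_n,b_{n-2})$, treated via the key geometric inequality \eqref{LGS} from Lemma~3.1 of~\cite{GS2}, Minkowski, and \eqref{sum}/\eqref{summ}, collapsing to ${\cal B}_1^r\|f\|^r_{L^p_v}$.

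For necessity, each of the three lower bounds follows by restricting \eqref{fTin} to suitable test configurations: $\int_y^\infty f\ge\int_x^\infty f$ gives $C_{\mathfrak T}\gtrsim{\cal B}_0$; restricting $y\in(0,\zeta^{-2}(x)]$ and applying \eqref{O} to extract $k(x,\zeta^{-2}(x))$ gives $C_{\mathfrak T}\gtrsim{\cal B}_1$; and $C_{\mathfrak T}\gtrsim{\cal B}_2$ is immediate for $p\leq r$ and, for $r<p$, obtained by the gluing construction of Theorem~\ref{theorem41} adapted to $\{b_n\}$ (near-extremal $f_n$ with unit $L^p_v$-norm supported in slightly enlarged $b$-intervals, combined into $g=\sum_n g_n$ with weights involving $\lambda_n^{s/(pr)}\|{\mathfrak H}_{b_{n-1},b_{n+2}}\|^{s/p}_{L^p_v\to L^q_{w(\cdot)k(b_{n+2},\cdot)}}$). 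The main obstacle I anticipate is the telescoping for $J_2$: one must verify that the iterated Oinarov splittings, combined with \eqref{LGS} and \eqref{sum}/\eqref{summ}, collapse at each dyadic scale to precisely the weight $k(x,\zeta^{-2}(x))^{r/q}(\int_0^{\zeta^{-2}(x)}w(y)(\int_y^\infty f)^q dy)^{r/q}$ of ${\cal B}_1$, with the correct shift $\zeta^{-2}$ and no logarithmic loss in the iteration; once the index bookkeeping is established, the remaining steps are routine transcriptions of those in Sections~\ref{TS}--\ref{bTS}.
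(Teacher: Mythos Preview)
Your proposal is correct and follows essentially the same route as the paper: the theorem is stated without proof there, the implicit argument being precisely the $\zeta$-dual transcription of the proof of Theorem~\ref{theorem41} that you outline (dyadic $\{b_n\}$ from $\int_t^\infty u$, split at $b_{n-2}$, $J_{1,1}/J_{1,2}$ handled by ${\cal B}_2/{\cal B}_0$, and $J_2$ via Oinarov plus \eqref{LGS}). One cosmetic point: in the $r<p$ necessity gluing, the discretized norms should carry indices $\|{\mathfrak H}_{b_{n-1},b_{n+3}}\|_{L^p_v\to L^q_{w(\cdot)k(b_{n+3},\cdot)}}$ rather than $b_{n+2}$, mirroring the $a_{n+3}$ that appears in $\bar{B}_2$ in Theorem~\ref{theorem41}; this is harmless bookkeeping and does not affect the argument.
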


\begin{thm} Let $1\le p<\infty$,  $0<r<\infty$, $0<q\le\infty,$ $\frac{1}{s}:=\left(\frac{1}{r}-\frac{1}{p}\right)_+$. For validity of the inequality \eqref{fSin} it is necessary and sufficient that the inequalities 
\begin{align*}
 &\left(\int_0^\infty u(x)\left(\int_{\zeta^{-3}(x)}^x k(x,y)w(y)dy\right)^{\frac{r}{q}}\left(\int_0^{\zeta^{-3}(x)} f\right)^r\,dx\right)^\frac{1}{r}\le {\bf B}_0\|f\|_{L^p_v},\\
&\left(\int_0^\infty u(x)[k(x,\zeta^{-2}(x))]^{\frac{r}{q}}\left(\int_0^{\zeta^{-2}(x)} w(y)\left(\int_0^y f\right)^qdy\right)^{\frac{r}{q}}\,dx\right)^\frac{1}{r}\le{\bf B}_1\|f\|_{L^p_v},
\end{align*}   
if $q<\infty$ or
\begin{align*}
&\left(\int_0^\infty u(x)[\underset{y\in(\zeta^{-3}(x),x)}{\mathrm{ess\,sup}}~k(x,y)w(y)]^r\left(\int_0^{\zeta^{-3}(x)} f\right)^r\,dx\right)^\frac{1}{r}\le {\bf B}_0\|f\|_{L^p_v},\\
&\left(\int_0^\infty u(x)[k(x,\zeta^{-2}(x))]^r\left(\underset{y\in(0,\zeta^{-2}(x))}{\mathrm{ess\,sup}}w(y)\int_0^y f\right)^r\,dx\right)^\frac{1}{r}\le {\bf B}_1\|f\|_{L^p_v}
\end{align*}   
for $q=\infty$ hold for all $f\in \Msp$ and the constant
 $${\bf B}_2:=\begin{cases}
    \displaystyle\sup_{t\in(0,\infty)}\left(\int_t^\infty u\right)^\frac{1}{r}\|{\mathfrak H}_t^*\|_{L^p_v\to L^q_{w(\cdot)k(t,\cdot)}}, & p\le r,\\
    \displaystyle \left(\int_0^\infty u(x)\left(\int_x^\infty u\right)^\frac{s}{p}\|{\mathfrak H}^*_{\zeta^{-1}(x),\zeta^2(x)}\|^s_{L^p_v\to L^q_{w(\cdot)k(\zeta^2(x),\cdot)}}\,dx\right)^\frac{1}{s}, & r<p
   \end{cases}
 $$
is finite. Moreover, $C_{\mathfrak S}\approx{\bf B}_0+{\bf B}_1+{\bf B}_2.$
\end{thm}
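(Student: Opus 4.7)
The plan is to mirror the proof of Theorem~\ref{integkrits} for the operator ${\bf S}$, swapping the role of $\sigma$ by $\zeta$ (and therefore the direction of the dyadic decomposition). First I fix $n_0\in\Ze$ with $2^{-n_0}<\int_0^\infty u$, pick $a_{n_0}$ with $\int_{a_{n_0}}^\infty u\approx 2^{-n_0}$, and extend by $a_{n+1}:=\zeta(a_n)$, $a_{n-1}:=\zeta^{-1}(a_n)$, so that $\int_{a_n}^\infty u\approx 2^{-n}$ and $\int_{a_n}^{a_{n+1}}u\approx 2^{-n}$. Since $({\mathfrak S}f)(x)$ is nondecreasing in $x$, and since Oinarov's condition \eqref{O} lets one replace $k(x,y)$ by $k(a_n,y)$ for $x\in[a_{n-1},a_n]$ and $y\le a_{n-1}$, the basic reduction
\[
J:=\int_0^\infty [{\mathfrak S}f]^r u\approx \sum_{n\ge -N}2^{-n}\left(\int_0^{a_n}k(a_n,y)w(y)\left(\int_0^y f\right)^q dy\right)^{r/q}
\]
follows from the dual form
$\sum_{n}2^{-n}\bigl(\sum_{i\le n}\mu_i\bigr)^s\approx\sum_n 2^{-n}\mu_n^s$
of \eqref{sum}, which is proved in the same way.

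For sufficiency I write $J\approx J_1+J_2$, splitting the inner $\int_0^{a_n}$ at $a_{n-2}$. In $J_1$ (near part, $y\in[a_{n-2},a_n]$) a further split $\int_0^y f=\int_0^{a_{n-3}}f+\int_{a_{n-3}}^y f$ produces a piece $J_{1,2}$ matching the integrand of the ${\bf B}_0$ inequality (using $\zeta^{-3}(a_n)=a_{n-3}$) and a piece $J_{1,1}$ whose inner integral is exactly $({\mathfrak H}^*_{a_{n-2},a_n}f)(y)$; the latter is summed using Jensen's inequality when $p\le r$ and H\"older's inequality with exponents $s/r$ and $p/r$ when $r<p$, producing the constant ${\bf B}_2$ in the form of its two cases \eqref{bfA2}. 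In $J_2$ (far part, $y\in[0,a_{n-2})$) I decompose the integral over the intervals $\Delta_i=[a_i,a_{i+1}]$ and apply Oinarov's decomposition $k(a_n,y)\approx k(a_n,a_{i+1})+k(a_{i+1},y)$: the $k(a_{i+1},y)$ term collapses to a $J_1$-type estimate; for the $k(a_n,a_{i+1})$ term the Grosse-Erdmann--Sinnamon lemma \eqref{LGS} gives $k(a_n,a_{i+1})\lesssim(\sum_{j=i+1}^{n-1}[k(a_{j+1},a_j)]^\alpha)^{1/\alpha}$ for some $\alpha\in(0,1)$, and Minkowski's inequality telescopes the resulting double sum into
$k(a_n,a_{n-2})^{r/q}\bigl(\int_0^{a_{n-2}}w(y)(\int_0^y f)^q dy\bigr)^{r/q}$,
which is the discrete form of the ${\bf B}_1$ integrand. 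The case $q=\infty$ is handled by replacing \eqref{sum} with \eqref{summ} and using supremum-Oinarov arguments as in the previous sections.

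For necessity, the bounds $C_{\mathfrak S}\gtrsim{\bf B}_0$ and $C_{\mathfrak S}\gtrsim{\bf B}_1$ are obtained by narrowing the integration regions on the left-hand side of \eqref{fSin}: restrict the inner $(0,y)\to(0,\zeta^{-3}(x))$ and the outer $(0,x)\to(\zeta^{-3}(x),x)$-strip for ${\bf B}_0$; restrict $(0,y)\to(0,\zeta^{-2}(x))$ together with the pointwise bound $k(x,y)\gtrsim k(x,\zeta^{-2}(x))$ (valid by \eqref{O} for $y\le\zeta^{-2}(x)\le x$) for ${\bf B}_1$. For $C_{\mathfrak S}\gtrsim{\bf B}_2$ in the case $p\le r$ one tests \eqref{fSin} on functions supported on $[\zeta^{-1}(t),\zeta(t)]$ and uses the trivial majorization $\int_x^\infty u\ge\int_t^\infty u$ on $x\le t$. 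For $r<p$ one repeats the discretize-plus-glue argument behind $\bar B_2$ in Theorem~\ref{integkrits}: choose $f_n\in\Msp$ with $\textrm{supp}\,f_n\subset [a_{n-2},a_{n+3}]$, $\|f_n\|_{L^p_v}=1$, and near-extremal for $\|{\mathfrak H}^*_{a_{n-1},a_{n+2}}\|_{L^p_v\to L^q_{w(\cdot)k(a_{n+2},\cdot)}}$, set $g_n:=(2^{-n})^{s/(pr)}\|{\mathfrak H}^*_{a_{n-1},a_{n+2}}\|^{s/p}f_n$ and $g:=\sum_n g_n$; bounded overlap of the supports gives $\|g\|_{L^p_v}^p\lesssim\bar{\bf B}_2^s$, while plugging $g$ into \eqref{fSin} recovers $\bar{\bf B}_2^s$ up to the factor $\theta^r$.

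The main technical obstacle is the index bookkeeping in $J_2$: one must check that the GS-lemma step combined with Minkowski's inequality collapses cleanly onto the power $k(a_n,a_{n-2})^{r/q}$, so that passing back from the discrete sum to the continuous integral produces precisely $k(x,\zeta^{-2}(x))$ (and not $k(x,\zeta^{-3}(x))$ or another shift), matching the ${\bf B}_1$ integrand of the theorem. A secondary subtlety is verifying that in the gluing step for $r<p$ the shifted supports $[a_{n-2},a_{n+3}]$ overlap at most a bounded number of times at every $\Delta_j$, so that the $\ell^p$-triangle inequality still gives $\|g\|_{L^p_v}^p\lesssim\bar{\bf B}_2^s$. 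Once these two points are checked, the argument is mechanical and parallels Theorem~\ref{integkrits} line by line.
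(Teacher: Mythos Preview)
Your proposal is correct and matches the paper's implicit approach: the theorem is stated in Section~\ref{mfTS} without proof precisely because it is the mirror of Theorem~\ref{integkrits}, replacing $\sigma$ by $\zeta$ and reversing the dyadic direction, exactly as you outline. On your flagged bookkeeping point: the GS-lemma/Minkowski step in $J_2$ actually collapses to $k(a_n,a_{n-1})^{r/q}\bigl(\int_0^{a_{n-2}}h\bigr)^{r/q}$ (a single kernel step, mirroring the term $k(a_{n+1},a_n)$ in Theorem~\ref{theorem41}), and the factor $k(x,\zeta^{-2}(x))$ in the continuous ${\bf B}_1$ integrand then arises from the index shift $2^{-n}\approx\int_{a_n}^{a_{n+1}}u$ together with the Oinarov bound $k(x,\zeta^{-2}(x))\gtrsim k(a_n,a_{n-1})$ for $x\in[a_n,a_{n+1}]$.
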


\section{$\Gamma^p(v)\to\Gamma^q(w)$ boundedness of the  maximal operator\label{M}}

The maximal Hardy-Littlewood operator is defined by
$$
Mf(x):=\underset{B}\sup\frac{1}{{\rm mes}B}\int_B|f(y)|dy
$$
where the supremum is taken over all balls centered at $x\in\mathbb{R}^n.$ The Lorentz $\Gamma-$spaces were introduced by E.T. Sawyer
\cite{Sa} while working on characterization of the
boundedness of the maximal operator in the weighted Lorentz spaces (see also, for instance, related papers \cite{CPSS}, \cite{CS}, \cite{GS1}, \cite{GS2}, \cite{Sin}, \cite{S3}).
More exactly, if $v\in\mathfrak{M}^+$ and $0<p<\infty,$ then
$$
\Gamma^p(v)=\left\{f\,\,\text {measurable\,\, on}\,\,{\mathbb R}^n: \left(\int_0^\infty[f^{\ast\ast}(x)]^pv(x)dx\right)^\frac{1}{p}<\infty\right\},
$$
where $f^{\ast\ast}(x):=\frac{1}{x}\int_0^xf^\ast(t)dt$ and
$$
f^\ast(t):=\inf\{s>0:{\rm mes}\{x:|f(x)|>s\}\leq t\}.
$$ 
It is known (\cite{BS}, Theorem 3.8) that
\begin{equation}\label{0.10}
[Mf]^\ast(x)\approx \frac{1}{x}\int_0^x f^\ast.
\nonumber
\end{equation} 
Therefore, $M:\Gamma^p(v)\to\Gamma^q(u)$ boundedness is equivalent to the weighted inequality
\begin{equation}\label{Min}
\left(\int_0^\infty \left(\frac{1}{x}\int_0^x\left(\frac{1}{y}\int_0^y f\right)dy\right)^qu(x)dx\right)^{\frac{1}{q}}\leq C \left(\int_0^\infty \left(\frac{1}{t}\int_0^t f\right)^pv(t)dt\right)^{\frac{1}{p}},\,\, f\in\mathfrak{M}^\downarrow
\end{equation}
restricted on the cone $\in\mathfrak{M}^\downarrow\subset\mathfrak{M}^+$ of all nonincreasing functions. Moreover, the least possible constant $C$ is equivalent to the norm of $M$ 
$$ 
C\approx\|M\|_{\Gamma^p(v)\to\Gamma^q(u)}:=\sup_{0\not=f\in\Gamma^p(v)}\frac{\|Mf\|_{\Gamma^q(u)}}{\|f\|_{\Gamma^p(v)}}.
$$
The inequality \eqref{Min} was first characterized in the case $1<p=q<\infty, u=v$ (\cite{S4}, Theorem 5.1) and for $1<p,q<\infty, u\not=v$ in (\cite{GHS}, Theorem 3.3) and (\cite{S}, Theorem 5.1) (see, also \cite{GS}).

Applying Theorems \ref{theorem31}  and \ref{theorem32}  we solve the problem for all $0<p,q<\infty$ and similar to \cite{S} our criteria have an explicit integral form.

Let $\Omega_{1,0}:=\{g\in\mathfrak{M}^\downarrow, tg(t)\in\mathfrak{M}^\uparrow\}.$  
Then $F(t)=\frac{1}{t}\int_0^t f\in\Omega_{1,0}$ for any $f\in\mathfrak{M}^\downarrow$ and $F^p\in\Omega_{p,0}:=\{g\in\mathfrak{M}^\downarrow, t^pg(t)\in\mathfrak{M}^\uparrow\}.$ By the change $G=F^p$ \eqref{Min} becomes equivalent to 
\begin{equation}\label{Gin}
\left(\int_0^\infty \left(\frac{1}{x}\int_0^x G^{\frac{1}{p}}\right)^qu(x)dx\right)^{\frac{p}{q}}\leq C^p \int_0^\infty Gv,\,\, G\in\Omega_{p,0}
\end{equation}
and applying (\cite{S}, Lemma 2.3) we reduce \eqref{Gin} to the inequality
\begin{equation}\label{Hin}
\left(\int_0^\infty \left(\frac{1}{x}\int_0^x \left(\int_0^\infty\frac{h(z)dz}{y^p+z^p}\right)^{\frac{1}{p}}dy\right)^qu(x)dx\right)^{\frac{p}{q}}\lesssim C^p \int_0^\infty hV,\,\, h\in\mathfrak{M}^+,
\end{equation}
where
\begin{equation*}
V(z)=\int_0^\infty\frac{v(y)dy}{y^p+z^p}.
\end{equation*}
Since
$$
\int_0^\infty\frac{h(z)dz}{y^p+z^p}\approx\int_y^\infty\frac{h(z)dz}{z^p}+\frac{1}{y^p}\int_0^y h(z)dz, 
$$
\eqref{Hin} is characterized by the following pair of inequalities:
\begin{equation*}
\left(\int_0^\infty \left(\frac{1}{x}\int_0^x \left(\int_y^\infty h(z)dz\right)^{\frac{1}{p}}dy\right)^qu(x)dx\right)^{\frac{p}{q}}\leq C_1^p \int_0^\infty h(t)t^pV(t)dt,\,\, h\in\mathfrak{M}^+
\end{equation*}
and
\begin{equation*}
\label{H2}
\left(\int_0^\infty \left(\frac{1}{x}\int_0^x \left(\int_0^y h(z)dz\right)^{\frac{1}{p}}\frac{dy}{y}\right)^qu(x)dx\right)^{\frac{p}{q}}\leq C_2^p \int_0^\infty hV,\,\, h\in\mathfrak{M}^+,
\end{equation*}
which are of the form \eqref{calTin} and \eqref{calSin}, respectively. Moreover, 
$$
C\approx C_1+C_2.
$$
Hence, applying Theorems 3.1 and 3.2, we see that 
\begin{equation}\label{C1}
C_1\approx {\mathcal A}_0+{\mathcal A}_2
\end{equation} 
and
\begin{equation}\label{C2}
C_2\approx {\bf A}_0+{\bf A}_2,
\end{equation} 
where the constants A's are defined by \eqref{calA0} and \eqref{calA2} for \eqref{C1} and by \eqref{bfA0} and \eqref{bfA2} for \eqref{C2} under related changes of weights, the function $\zeta$ and auxiliary operators. 

Suppose for simplicity that $0<\int_t^\infty s^{-q}u(s)ds<\infty$ for all $t>0.$
Now, the functions $\zeta$ and $\zeta^{-1}$ are defined by 
$$
\zeta(x):=\sup\left\{y>0:\int_y^\infty s^{-q}u(s)ds\ge \frac{1}{2}\int_x^\infty s^{-q}u(s)ds\right\},$$
$$
\zeta^{-1}(x):=\sup\left\{y>0:\int_y^\infty s^{-q}u(s)ds\ge 2\int_x^\infty s^{-q}u(s)ds\right\}.
$$
For $0\le c<d<\infty$ and $h\in\mathfrak{M}^+$ we put
\begin{align*}
({\cal H}_{c,d}h)(x):=\chi_{(c,d]}(x)\int_x^{\zeta(d)}h,\\
({\cal H}_d h)(x):=\chi_{(0,d]}(x)\int_x^\infty h,\\
({\cal H}^*_{c,d} h)(x):=\chi_{(c,d]}(x)\int_{\zeta^{-1}(c)}^x h,\\
 ({\cal H}^*_d h)(x):=\chi_{(0,d]}(x)\int_0^x h.
\end{align*}
By Theorem \ref{theorem31} ${\cal A}_0$ is the least possible constant in the inequaity
\begin{equation*}
\left(\int_0^\infty u(x)\left(\int_x^\infty h\right)^\frac{q}{p}\,dx\right)^\frac{p}{q}\le {\cal A}_0^p\int_0^\infty h(z)z^pV(z)dz,\,\,h\in\mathfrak{M}^+
\end{equation*}
 and  ${\cal A}_2$ is defined by 
\begin{equation}\label{clA2}
{\cal A}_2^p:=\begin{cases}
    \displaystyle\sup_{t\in(0,\infty)}\left(\int_t^\infty  s^{-q}u(s)ds\right)^\frac{p}{q}\|{\cal H}_t\|_{L^1_{z^pV(z)}\to L^{\frac{1}{p}}}, & p\le q,\\
    \displaystyle \left(\int_0^\infty x^{-q}u(x)\left(\int_x^\infty s^{-q}u(s)ds\right)^\frac{q}{p-q}\|{\cal H}_{\zeta^{-1}(x),\zeta(x)}\|^\frac{q}{p-q}_{L^1_{z^pV(z)}\to L^\frac{1}{p}}dx\right)^\frac{p-q}{q}, & q<p.
\end{cases}
\end{equation}
Also, by Theorem \ref{theorem32} ${\bf A}_0$ is the best possible constant in the inequaity
\begin{equation*}
\left(\int_0^\infty x^{-q}u(x)\left(\log\frac{x}{\zeta^{-2}(x)}\right)^q\left(\int_0^{\zeta^{-2}(x)} h\right)^\frac{q}{p}\,dx\right)^\frac{p}{q}\le {\bf A}_0^p\int_0^\infty hV,\,\,h\in\mathfrak{M}^+
\end{equation*}
 and  ${\bf A}_2$ is determined from 
\begin{equation}\label{bA2}
{\bf A}_2^p:=\begin{cases}
    \displaystyle\sup_{t\in(0,\infty)}\left(\int_t^\infty  s^{-q}u(s)ds\right)^\frac{p}{q}\|{\cal H}_t^\ast\|_{L^1_V\to L^\frac{1}{p}_{\frac{1}{y}}}, & p\le q,\\
    \displaystyle \left(\int_0^\infty x^{-q}u(x)\left(\int_x^\infty s^{-q}u(s)ds\right)^\frac{q}{p-q}\|{\cal H}_{\zeta^{-1}(x),\zeta(x)}^\ast\|^\frac{q}{p-q}_{L^1_V\to L^\frac{1}{p}_{\frac{1}{y}}}dx\right)^\frac{p-q}{q}, & q<p.
\end{cases}
\end{equation}
By well known results (\cite{KA}, Chapter XI, \S\, 1.5, Theorem 4, see also \cite{GS2}, Theorem 1.1) and (\cite{SS}, Theorem 3.3) we have
\begin{equation}\label{cllA0}
{\cal A}_0^p=\sup_{t>0}\frac{\left(\int_0^t u\right)^\frac{p}{q}}{t^pV(t)},\,\,p\leq q
\end{equation}
and
\begin{equation}\label{clllA0}
{\cal A}_0^p\approx\left(\int_0^\infty\left[t^pV(t)\right]^\frac{q}{q-p}
\left(\int_0^t u\right)^\frac{q}{p-q}u(t)dt\right)^\frac{p-q}{q},\,\,q<p.
\end{equation}
Analogously, we find
\begin{equation}\label{bffA0}
{\bf A}_0^p=\sup_{t>0}\frac{\left(\int_{\zeta^2(t)}^\infty x^{-q}u(x)\left(\log\frac{x}{\zeta^{-2}(x)}\right)^q\,dx\right)^\frac{p}{q}}{V(t)},\,\,p\leq q
\end{equation}
and for $q<p$
\begin{equation}\label{bfffA0}
{\bf A}_0^p\approx
\left(\int_0^\infty
\left(\frac{\int_x^\infty s^{-q}u(s)\left(\log\frac{s}{\zeta^{-2}(s)}\right)^qds}{V(\zeta^{-2}(x))}\right)^\frac{q}{p-q}x^{-q}u(x)\left(\log\frac{x}{\zeta^{-2}(x)}\right)^qdx
\right)^\frac{p-q}{q}.
\end{equation}
Again, applying (\cite{KA}, Chapter XI, \S\, 1.5, Theorem 4) and (\cite{SS}, Theorem 3.3) we obtain
\begin{equation*}
\|{\cal H}_t\|_{L^1_{z^pV(z)}\to L^{\frac{1}{p}}}=[V(t)]^{-1},\,\,0<p\leq 1
\end{equation*}
and
\begin{equation*}
\|{\cal H}_t\|_{L^1_{z^pV(z)}\to L^{\frac{1}{p}}}\approx\left(\int_0^t[V(x)]^\frac{1}{1-p}\frac{dx}{x}\right)^{p-1},\,\,p>1,
\end{equation*}
so that it follows from \eqref{clA2} for $p\leq q$
\begin{equation}\label{clA21}
{\cal A}_2=\sup_{t\in(0,\infty)}\left(\int_t^\infty  s^{-q}u(s)ds\right)^\frac{1}{q}
[V(t)]^{-\frac{1}{p}},\,\,0<p\leq 1,
\end{equation}
and
\begin{equation}\label{clA22}
{\cal A}_2\approx\sup_{t\in(0,\infty)}\left(\int_t^\infty  s^{-q}u(s)ds\right)^\frac{1}{q}
\left(\int_0^t[V(x)]^\frac{1}{1-p}\frac{dx}{x}\right)^\frac{1}{p^\prime},  p>1,
\end{equation}
where $p^\prime:=\frac{p}{p-1}.$ By the same way,
\begin{equation*}
\|{\cal H}_{\zeta^{-1}(x),\zeta(x)}\|_{L^1_{z^pV(z)}\to L^\frac{1}{p}}=\left[\frac{\zeta(x)-\zeta^{-1}(x)}{\zeta(x)}\right]^p\frac{1}{V(\zeta(x))},\,\,0<p\leq 1
\end{equation*}
and
\begin{equation*}
\|{\cal H}_{\zeta^{-1}(x),\zeta(x)}\|_{L^1_{z^pV(z)}\to L^\frac{1}{p}}\approx
\left(\int_{\zeta^{-1}(x)}^{\zeta(x)}[t^pV(t)]^{\frac{1}{1-p}}(t-\zeta^{-1}(x))^\frac{1}{p-1}dt\right)^{p-1},\,\,p>1.
\end{equation*}
Hence, from \eqref{clA2} we see that for $q<p$
\begin{equation}\label{clA211}
{\cal A}_2\approx
\left(\int_0^\infty x^{-q}u(x)\left(\int_x^\infty s^{-q}u(s)ds\right)^\frac{q}{p-q}\left[\frac{(\zeta(x)-\zeta^{-1}(x)))}{\zeta(x)[V(\zeta(x))]^\frac{1}{p}}\right]^\frac{pq}{p-q}dx\right)^\frac{p-q}{pq},
\end{equation}
if $0<p\leq 1$ and
\begin{align}\label{clA221}
&{\cal A}_2\approx\nonumber\\
&\left(\int_0^\infty x^{-q}u(x)\left(\int_x^\infty s^{-q}u(s)ds\right)^\frac{q}{p-q}\left(\int_{\zeta^{-1}(x)}^{\zeta(x)}\left(\frac{t-\zeta^{-1}(x)}{t^pV(t)}\right)^\frac{1}{p-1}dt\right)^\frac{q(p-1)}{p-q}dx\right)^\frac{p-q}{pq},
\end{align}
when $p>1.$

Similarly,
\begin{equation*}
\|{\cal H}^\ast_t\|_{L^1_V\to L^\frac{1}{p}_{\frac{1}{y}}}=\sup_{s\in(0,t)}[V(s)]^{-1}\left(\log\frac{t}{s}\right)^p,\,\,0<p\leq 1
\end{equation*}
and
\begin{equation*}
\|{\cal H}^\ast_t\|_{L^1_V\to L^\frac{1}{p}_{\frac{1}{y}}}\approx\left(\int_0^t[V(x)]^\frac{1}{1-p}\left(\log\frac{t}{s}\right)^\frac{1}{p-1}\frac{dx}{x}\right)^{p-1},\,\,p>1.
\end{equation*}
Now, it follows from \eqref{bA2} for $p\leq q$
\begin{equation}\label{bA21}
{\bf A}_2=\sup_{t\in(0,\infty)}\left(\int_t^\infty  s^{-q}u(s)ds\right)^\frac{1}{q}\sup_{s\in(0,t)}[V(s)]^{-\frac{1}{p}}\log\frac{t}{s},\,\,0<p\leq 1
\end{equation}
and
\begin{equation}\label{bA22}
{\bf A}_2\approx\sup_{t\in(0,\infty)}\left(\int_t^\infty  s^{-q}u(s)ds\right)^\frac{1}{q}\left(\int_0^t[V(x)]^\frac{1}{1-p}\left(\log\frac{t}{s}\right)^\frac{1}{p-1}\frac{dx}{x}\right)^{\frac{1}{p^\prime}},  p>1,
\end{equation}
We have
\begin{equation*}
\|{\cal H}^\ast_{\zeta^{-1}(x),\zeta(x)}\|_{L^1_V\to L^\frac{1}{p}_{\frac{1}{y}}}=
\sup_{s\in(\zeta^{-1}(x),\zeta(x))}\frac{\left(\log\frac{\zeta(x)}{s}\right)^p}{V(s)}
,\,\,0<p\leq 1
\end{equation*}
and
\begin{equation*}
\|{\cal H}^\ast_{\zeta^{-1}(x),\zeta(x)}\|_{L^1_V\to L^\frac{1}{p}_{\frac{1}{y}}}\approx
\left(\int_{\zeta^{-1}(x)}^{\zeta(x)}[V(t)]^{\frac{1}{1-p}}\left(\log\frac{\zeta(x)}{t}\right)^\frac{1}{p-1}\frac{dt}{t}\right)^{p-1},\,\,p>1.
\end{equation*}
Thus, from \eqref{bA2} we find for $q<p$
\begin{equation}\label{bA211}
{\bf A}_2\approx
\left(\int_0^\infty x^{-q}u(x)\left(\int_x^\infty s^{-q}u(s)ds\right)^\frac{q}{p-q}\left[\sup_{s\in(\zeta^{-1}(x),\zeta(x))}\frac{\left(\log\frac{\zeta(x)}{s}\right)^p}{V(s)}\right]^\frac{q}{p-q}dx\right)^\frac{p-q}{pq},
\end{equation}
if $0<p\leq 1$ and
\begin{align}\label{bA221}
&{\bf A}_2\approx\nonumber\\
&\left(\int_0^\infty x^{-q}u(x)\left(\int_x^\infty s^{-q}u(s)ds\right)^\frac{q}{p-q}\left(\int_{\zeta^{-1}(x)}^{\zeta(x)}\left(\frac{\log\frac{\zeta(x)}{t}}{V(t)}\right)^\frac{1}{p-1}\frac{dt}{t}\right)^\frac{q(p-1)}{p-q}dx\right)^\frac{p-q}{pq},
\end{align}
when $p>1.$

Finally, we obtain the following.
\begin{thm} Let $0<p,q<\infty.$ Then for the maximal Hardy-Littlewood operator 
\begin{equation}
\|M\|_{\Gamma^p(v)\to\Gamma^q(u)}\approx {\cal A}_0+{\cal A}_2+{\bf A}_0+{\bf A}_2,
\end{equation}
where the constants on the right-hand side are determined by \eqref{cllA0}-\eqref{bfffA0} for ${\cal A}_0$ and ${\bf A}_0$ and by \eqref{clA21}-\eqref{bA221} for ${\cal A}_2$ and ${\bf A}_2.$

\end{thm}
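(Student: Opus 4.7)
The plan is to collect all the preceding reductions and explicit norm computations into a single statement; no new idea is required beyond careful bookkeeping. First, I would invoke the equivalence $[Mf]^\ast(x)\approx \frac{1}{x}\int_0^x f^\ast$ from \cite{BS} to translate the $\Gamma^p(v)\to\Gamma^q(u)$ boundedness of $M$ into the weighted inequality \eqref{Min} restricted to $\mathfrak{M}^\downarrow$. Then I would apply the substitution $G=F^p$ with $F(t)=\tfrac1t\int_0^t f\in\Omega_{1,0}$, so that $G\in\Omega_{p,0}$ and \eqref{Min} becomes the inequality \eqref{Gin} on $\Omega_{p,0}$.

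Next, I would use Sinnamon's Lemma 2.3 of \cite{S} to represent an arbitrary $G\in\Omega_{p,0}$ as $G(y)\approx\int_0^\infty\frac{h(z)\,dz}{y^p+z^p}$ with $h\in\mathfrak{M}^+$, reducing \eqref{Gin} to the dual-type inequality \eqref{Hin} with the weight $V$ on the right-hand side. The elementary splitting $\frac{1}{y^p+z^p}\approx \frac{\chi_{\{z>y\}}}{z^p}+\frac{\chi_{\{z<y\}}}{y^p}$ then decouples \eqref{Hin} into two inequalities, one of the form \eqref{calTin} (with constant $C_1$) and one of the form \eqref{calSin} (with constant $C_2$), and $C\approx C_1+C_2$.

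The third step is to apply Theorems \ref{theorem31} and \ref{theorem32} to these two inequalities. In the present setting the inner kernel in each is trivial ($k\equiv 1$), so the summands $\mathcal{A}_1$ and $\mathbf{A}_1$ collapse into $\mathcal{A}_0$ and $\mathbf{A}_0$ respectively, giving $C_1\approx \mathcal{A}_0+\mathcal{A}_2$ and $C_2\approx \mathbf{A}_0+\mathbf{A}_2$. The defining inequalities for $\mathcal{A}_0$ and $\mathbf{A}_0$ are classical weighted Hardy inequalities; the Kantorovich--Akilov criterion \cite{KA} for $p\le q$ and the Sinnamon--Stepanov integral criterion \cite{SS} for $q<p$ yield \eqref{cllA0}--\eqref{bfffA0} at once.

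For $\mathcal{A}_2$ and $\mathbf{A}_2$ the remaining work is to compute the local operator norms $\|\mathcal{H}_t\|$, $\|\mathcal{H}_{\zeta^{-1}(x),\zeta(x)}\|$, $\|\mathcal{H}^\ast_t\|$, $\|\mathcal{H}^\ast_{\zeta^{-1}(x),\zeta(x)}\|$ between the weighted spaces appearing in \eqref{clA2} and \eqref{bA2}. These reduce, via the duality $L^1\to L^{1/p}$, to one-dimensional Hardy-type inequalities on intervals whose best constants are again furnished by \cite{KA} and \cite{SS}; one must split into the subcases $0<p\le 1$ (where the criterion takes supremum form) and $p>1$ (where an integral expression involving $[V]^{1/(1-p)}$ appears). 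Plugging the resulting expressions into \eqref{clA2} and \eqref{bA2} produces \eqref{clA21}--\eqref{clA221} and \eqref{bA21}--\eqref{bA221}; assembling the four bounds gives the claimed equivalence. The main obstacle is purely combinatorial: keeping track of the four parameter subregions ($p\le q$ vs.\ $q<p$, crossed with $p\le 1$ vs.\ $p>1$) and verifying that Sinnamon's representation passes through the equivalence of constants — both are routine but tedious.
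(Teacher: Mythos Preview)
Your proposal is correct and follows precisely the route the paper itself takes: the same reduction chain $[Mf]^\ast\approx f^{\ast\ast}\Rightarrow$\eqref{Min}$\Rightarrow$\eqref{Gin}$\Rightarrow$\eqref{Hin} via \cite{BS} and \cite[Lemma 2.3]{S}, the same kernel splitting into a \eqref{calTin}-type and a \eqref{calSin}-type inequality, the same application of Theorems~\ref{theorem31} and~\ref{theorem32} with $k\equiv 1$ (so that indeed $\mathcal{A}_1=\mathcal{A}_0$ and $\mathbf{A}_1=\mathbf{A}_0$), and finally the same evaluation of the Hardy constants and local operator norms via \cite{KA} and \cite{SS} in the four parameter subregions. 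There is nothing to add.
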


\bibliographystyle{abbrv}

\vspace{3mm}

Dmitrii V.~Prokhorov, Computing Centre FEB RAS, Khabarovsk, Russia. {\it E-mail address}: prohorov@as.khb.ru
\vspace{3mm}

Vladimir D.~Stepanov, Steklov Institute of Mathematics; Peoples Friendship University of Russia, Moscow, Russia. {\it E-mail address}: stepanov@mi.ras.ru

\end{document}